\title{The Herbrand Topos}
\author{Benno van den Berg}
\date{April 18, 2013}
\newcommand{\N}{\mathbb{N}}
\newcommand{\R}{\mathbb{R}}
\DeclareMathOperator{\st}{st} 
\newcommand{\existsst}{\exists^{\st{}}\!}
\begin{document}

\maketitle

\begin{abstract}
\noindent We define a new topos, the \emph{Herbrand topos}, inspired by the modified realizability topos and our earlier work on Herbrand realizability. We also introduce the category of Herbrand assemblies and characterise these as the $\lnot\lnot$-separated objects in the Herbrand topos. In addition, we show that the category of sets is included as the category of $\lnot\lnot$-sheaves and prove that the inclusion functor preserves and reflects validity of first-order formulas.
\end{abstract}

\section{Introduction}

In \cite{bergbriseidsafarik12} the author, together with Eyvind Briseid and Pavol Safarik, hit upon a new realizability interpretation in an attempt to find computational content in arguments performed in nonstandard analysis. This new interpretation, which was a variant of modified realizability, was dubbed Herbrand realizability. Our investigations in  \cite{bergbriseidsafarik12} were entirely proof-theoretic; the question was whether it would be possible to understand Herbrand realizability from a semantic point of view as well. This paper shows that that is indeed the case.

To develop this semantics we use topos theory (for which see \cite{maclanemoerdijk92, johnstone02a, johnstone02b}). This choice was motivated by the fact that the notion of a topos is the most comprehensive notion of model for a constructive system we have available, incorporating topological, sheaf and Kripke models, as well as various realizability and functional interpretations. In addition, it shows that these interpretations can be made to work for full higher-order arithmetic. The starting point for this paper was the theory of realizability toposes (beginning with \cite{hyland82} and surveyed in \cite{vanoosten08}): indeed, the topos most closely related to the topos we will introduce here is the modified realizability topos (for which, see \cite{vanoosten97, vanoosten08}). 

In order to arrive at the modified realizability topos, one has to abstract considerably from Kreisel's original definition \cite{kreisel59}. First of all, one fixes the hereditarily effective operations (HEO) as a model of G\"odel's $T$. Then a type gets identified with a certain inhabited set of codes and a set of realizers of that type will simply be subset of that set. The step that Grayson took in \cite{grayson81} was to take as truth values any pair $(A_0, A_1)$ where $A_0$ and $A_1$ are two sets of codes (often called the actual realizers and the potential realizers, respectively) with $A_0 \subseteq A_1$ and $A_1$ containing a fixed element. One can build a tripos around such pairs and in the associated topos the finite types will be interpreted as the hereditarily effective operations.

In order to define the Herbrand topos, we make a similar move. Recall from \cite{bergbriseidsafarik12} that in order to realize
\[ (\existsst n \in \NN) \, \varphi(n) \]
it suffices to supply a finite list of natural numbers $(n_1, \ldots, n_k)$ such that $\varphi(n_i)$ is realized for some $i \leq k$. Abstracting away from the details, this means that potential realizers are finite lists of natural numbers, while the actual realizers are those finite lists $(n_1, \ldots, n_k)$ which contain an $n_i$ which works (this is similar to the idea of Herbrand disjunctions in proof theory; hence the name). Abstracting even further, we say that truth values in the Herbrand topos are pairs of sets of codes $(A_0, A_1)$ such that $A_0$ consists of finite sequences all whose elements belong to $A_1$ and which is closed upwards (by this we mean that it is closed under supersets, if we regard finite sequences as representatives for their set of components; see also Lemma 5.4 in \cite{bergbriseidsafarik12}). This paper shows that on the basis of these pairs one can construct a tripos, whose associated topos we will call the Herbrand topos. 

The Herbrand topos turns out to have several features in common with other realizability toposes. It has an interesting subcategory consisting of the $\lnot\lnot$-separated objects (we will call these the Herbrand assemblies) and the category of sets is included as a subtopos via the $\lnot\lnot$-topology. What is very unusual, however, is that this inclusion functor, which we will call $\nabla$, preserves and reflects the validity of first-order logic; in fact, $\nabla$ preserves and reflects the structure of a locally cartesian closed pretopos. In particular, $\nabla 2 = 2$ in the Herbrand topos.

This is a striking illustration of the fact that in the Herbrand topos disjunction has essentially no constructive content. Indeed, in order for a disjunction $\varphi \lor \psi$ to be realized it is sufficient that one of the two disjuncts is realized; but a realizer for $\varphi \lor \psi$ need not say which disjunct it is that is actually realized. This fact explains many of the features of the Herbrand topos: why it believes in the law of excluded middle for $\Pi^0_1$-formulas, and why it does not believe in Church's thesis or in continuity principles.

However, arithmetic in the Herbrand topos is not classical. This is due to the fact that existential quantifiers still have some constructive content. Admittedly, this content is less than is usually the case, but it is still strong enough to rule out Markov's principle.

Finally, there are two other properties of the Herbrand topos which are worth mentioning. First of all, because $\nabla$ preserves and reflects first-order logic, $\nabla A$ will be a nonstandard model of arithmetic in the Herbrand topos for any nonstandard model $A$ (actually, it will also be a nonstandard model when $A$ is the standard model). This is interesting, because realizability toposes are unfavourable terrain for nonstandard models of arithmetic (see \cite{mccarty87}).

A proof-theoretic feature of the Herbrand topos which is worth stressing is that in it the Fan Theorem holds (see \refprop{fan} below); for this it is not necessary to assume its validity in the metatheory. The proof should look very familiar to anyone who is aware of the bounded modified realizability interpretation and its properties (for which, see \cite{ferreiranunes06}).

The contents of the paper are as follows. In Section 2 we will explain the notation that we will use, in so far as it is not standard. We define the Herbrand tripos and topos in Section 3. Section 4 introduces the Herbrand assemblies and proves that they form a locally cartesian closed regular category with finite stable colimits and a natural numbers object. Then, in Section 5, we characterise these Herbrand assemblies as the $\lnot\lnot$-separated objects in the Herbrand topos; in addition, we show that the category of sets is included as the $\lnot\lnot$-sheaves and that the inclusion functor preserves and reflects the structure of a locally cartesian closed pretopos. In Section 6, we characterise the projectives in the Herbrand assemblies and show that the natural numbers object is projective in the category of Herbrand assemblies, but not in the Herbrand topos. Section 7 studies the logical properties of the Herbrand topos and, finally, in Section 8 we discuss some further developments.

In this paper we assume familiarity with the theory of triposes and partial combinatroy algebras; for the necessary background information, we recommend \cite {vanoosten08}.

We would like to thank Jaap van Oosten, Wouter Stekelenburg and the referee for useful comments. The author was supported by a grant from the Netherlands Organisation for Scientific Research (NWO).

\section{Notation}

Throughout this paper, $\pca{P}$ will be a fixed nontrivial partial combinatory algebra (pca). It is well known that we can code finite sequences of elements in $\pca{P}$ as elements of $\pca{P}$. If $n$ is such a code, then we write $|n|$ for the length of the sequence it codes and $n_i$ for the $i$th projection (where $n_1$ is the first element of the sequence and $n_{|n|}$ the last). We will write $<n_1, \ldots, n_k>$ for the code of the sequence $(n_1, \ldots, n_k)$ and $<>$ for the code of the empty sequence. Moreover, $*$ will denote the operation of concatenation of coded sequences and $a_1 * a_2 * \ldots * a_k$ will stand for the result of concatenating $a_1$ till $a_k$. If $k = 0$ (i.e., if we are taking the empty concatenation), then the result should be the code of the empty sequence $<>$.

If $A \subseteq \pca{P}$ is some subset of $\pca{P}$, then we will write $!A$ for the set of codes of finite sequences all whose elements belong to $A$ (note that this will always include the empty sequence). For our purposes it will be important that $!A$ carries a preorder structure with $m \preceq n$ if
\[ (\forall i \leq |m|) \, (\exists j \leq |n|) \, m_i = n_j. \]

We will write ${\bf p}$ for the pairing operator and denote the $n$th Church numeral simply by $n$. In addition, if $A$ and $B$ are subsets of $\pca{P}$, we will write
\begin{eqnarray*}
A \& B & = &  \{ {\bf p}0a \, : \, a \in A \} \cup \{ {\bf p}1b \, : \, b \in B \}, \\
A \otimes B & = & \{ {\bf p}ab \, : \, a \in A, b \in B \}.
\end{eqnarray*}
Observe that there is an exponential isomorphism
\[ !(A \& B) \cong !A \otimes !B, \]
where both the map itself and and its inverse are order-preserving and represented by elements in the pca, independent of the specific $A$ and $B$. We will often implicitly use this isomorphism and regard elements of $!(A \& B)$ as pairs ${\bf p}xy$ with $x \in !A$ and $y \in !B$.

\section{The Herbrand tripos}

The Herbrand topos will be obtained from the \emph{Herbrand tripos}. To construct this tripos first put
\[ \Sigma = \{ (A_0, A_1) \, : \, A_0, A_1 \subseteq \pca{P}, A_0 \subseteq !A_1 \mbox{ and } A_0 \mbox{ is upwards closed in } !A_1 \}, \]
where $A_0$ being upwards closed in $!A_1$ means that $m \in A_0, n \in !A_1$ and $m \preceq n$ imply $n \in A_0$. For $X \in \Sigma$, we will denote the result of projecting on the first and second coordinate by $X_0$ and $X_1$. As for modified realizability, we will refer to the elements of $X_0$ as the \emph{actual realizers} and the elements of $!X_1$ as the \emph{potential realizers}.

If $X$ is any set, then we preorder $\Sigma^X$ as follows: $(\phi: X \to \Sigma) \leq (\psi: X \to \Sigma)$ if there is an element $r \in \pca{P}$ such that for all $x \in X$ and $n \in \pca{P}$
\[ \mbox{if } n \in !\phi(x)_1 \mbox{, then } r \cdot n \downarrow \mbox{ and } r \cdot n \in !\psi(x)_1 \]
and
\[ \mbox{if } n \in \phi(x)_0 \mbox{, then } r \cdot n \in \psi(x)_0. \]
Finally, if $f: X \to Y$ is any function, then reindexing $f^*: \Sigma^Y \to \Sigma^X$ is given simply by precomposition.

\begin{theo}{Herbrandtripos}
The indexed preorder $\Sigma^X$ defined above is a tripos.
\end{theo}

The associated topos we will call the Herbrand topos and denote by $\HT[\pca{P}]$.

\begin{proof}
We first verify that $\Sigma$ has the structure of a Heyting prealgebra. The top and bottom element are $(!\pca{P}, \pca{P})$ and $(\emptyset, \emptyset)$, respectively. The conjunction $(C_0, C_1) = (A_0, A_1) \land (B_0, B_1)$ is given by
\begin{eqnarray*}
C_1 & = & A_1 \& B_1, \\
C_0 & = & \{ {\bf p}ab \, : \, a \in A_0 \mbox{ and } b \in B_0 \}
\end{eqnarray*}
(making use of the exponential isomorphism). The disjunction $(C_0, C_1) = (A_0, A_1) \lor (B_0, B_1)$ is given by
\begin{eqnarray*}
C_1 & = & A_1 \& B_1, \\
C_0 & = & \{ {\bf p}ab \, : \, a \in A_0 \mbox{ or } b \in B_0 \}
\end{eqnarray*}
(again making use of the exponential isomorphism). To see that this works, suppose that $(A_0, A_1) \leq (D_0, D_1)$ is tracked by $r$ and $(B_0, B_1) \leq (D_0, D_1)$ is tracked by $s$. Then $(C_0, C_1) \leq (D_0, D_1)$ is tracked by the following map $t$: $t({\bf p}ab) = ra * sb$. This works, because for ${\bf p}ab \in C_0$ we have $a \in A_0$ or $b \in B_0$: in the former case, we have $ra \in D_0$, in the latter $sb \in D_0$. In either case we have $ra * sb$ because $D_0$ is upwards closed.

The implication $(C_0, C_1) = (A_0, A_1) \to (B_0, B_1)$ is given by
\begin{eqnarray*}
C_1 & = & \{ c \in \pca{P} \, : \, (\forall m \in !A_1) \, c \cdot m \downarrow \mbox{ and } c \cdot m \in !B_1 \}, \\
C_0 & = & \uparrow_{!C_1} \{ <c> \, : \, c \in C_1 \mbox{ and } (\forall m \in A_0) \, c \cdot m \in B_0 \}.
\end{eqnarray*}
To see that this works, note that the evaluation map $(C_0, C_1) \land (A_0, A_1) \to (B_0, B_1)$ is tracked by the function which maps ${\bf p}(<c_1,\ldots, c_k >, a)$ to
\[ c_1(a) * \ldots * c_k(a). \]
It is clear that this Heyting prealgebra structure lifts to each $\Sigma^X$.

As said, the reindexing functors are given by precomposition. We now check that these have both adjoints satisfying the Beck-Chevalley condition. So suppose we have $\phi: X \to \Sigma, \chi: Y \to \Sigma$ and $f: X \to Y$. The existential quantifier can be defined as follows:
\begin{eqnarray*}
\exists_f(\phi)(y)_1 & = & \bigcup_{x \in f^{-1}(y)} !\phi(x)_1, \\
\exists_f(\phi)(y)_0 & = & \uparrow_{!\exists_f(\phi)(y)_1} \{ <n> \, : \, (\exists x \in f^{-1}(y)) \, n \in \phi(x)_0 \}.
\end{eqnarray*}
To see this, note that 
\begin{enumerate}
\item we have $\exists_f(\phi) \leq \chi$ if there is an $r \in \pca{P}$ sending for each $y \in Y$ elements from $!\exists_f(\phi)(y)_1$ to elements in $!\chi(y)_1$, in such a way that if $m \in \exists_f(\phi)(y)_0$, then $r \cdot m \in \chi(y)_0$. 
\item And that we have $\phi \leq f^*(\chi)$, if there is an $s \in \pca{P}$ sending for each $x \in X$ elements in $!\phi(x)_1$ to elements in $!\chi(fx)_1$, in such a way that if $m \in \phi(x)_0$, then $s \cdot m \in \chi(fx)_0$.
\end{enumerate}
To construct such an $s$ from an $r$, put
\[ s(m) = r(<m>) \]
and to construct such an $r$ from an $s$, put
\[ r(<m_1, \ldots, m_k>) = s(m_1) * \ldots s(m_k). \]

The universal quantifier is constructed as follows:
\begin{eqnarray*}
\forall_f(\phi)(y)_1 & = & \{ a \in \pca{P} \, : \, (\forall x \in f^{-1}(y)) \, (\forall b \in \pca{P}) \, a \cdot b \downarrow \mbox{ and } a \cdot b \in  !\phi(x)_1 \}, \\
\forall_f(\phi)(y)_0 & = & \uparrow_{!\forall_f(\phi)(y)_1} \{ <a> \, : \, a \in \forall_f(\phi)(y)_1 \mbox{ and } \\
& &  (\forall x \in f^{-1}(y)) \, (\forall b \in \pca{P}) \, a \cdot b \in \phi(x)_0 \}.
\end{eqnarray*}
To see this, note that 
\begin{enumerate}
\item we have $f^*(\chi) \leq \phi$ if there is an $r \in \pca{P}$ sending for each $x \in X$ elements from $!\chi(fx)_1$ to elements in $!\phi(x)_1$, in such a way that if $m \in \chi(fx)_0$, then $r \cdot m \in \phi(x)_0$. 
\item And that we have $\chi \leq \forall_f(\phi)$, if there is an $s \in \pca{P}$ sending for each $y \in Y$ elements in $!\chi(y)_1$ to elements in $!\forall_f(\phi)(y)_1$, in such a way that if $m \in \chi(y)_0$, then $s \cdot m \in \forall_f(\phi)(y)_0$.
\end{enumerate}
To construct such an $s$ from an $r$, put
\[ s = \lambda p.<\lambda q.r \cdot p> \]
and to construct such an $r$ from an $s$, put
\[ r = \lambda p.\big[ \, (s \cdot p)_0(k) * \ldots *(s \cdot p)_{|s \cdot p|}(k)\,  \big]. \]
It is easy to see that the universal quantifier, and hence also the the existential quantifier, satisfies Beck-Chevalley.

Finally we need to construct a generic element; but that can quite straightforwardly be taken to be the identity map $\id: \Sigma \to \Sigma$.
\end{proof}

\section{Herbrand assemblies}

In this section we introduce the Herbrand assemblies and prove that they form a locally cartesian closed regular category with stable colimits. Later we will characterise them as the $\lnot\lnot$-separated objects in the Herbrand topos. Of course, it follows from this that the category is a regular locally cartesian closed category with stable colimits, but we need an explicit description of this structure later; in addition, such a description is, we believe, of independent interest.

\begin{defi}{herbrassemblies}
A \emph{Herbrand assembly} over $\pca{P}$ is a triple $(A, {\cal A}, \alpha)$ in which
\begin{itemize}
\item $A$ is a set, 
\item ${\cal A}$ is a subset of $\pca{P}$, and
\item $\alpha: A \to {\rm Pow}^{upcl}_i(!{\cal A})$ is a function whose codomain ${\rm Pow}^{upcl}_i(!{\cal A})$ consists of the subsets $X$ of $!{\cal A}$ that are inhabited and upwards closed in $!\pca{A}$.
\end{itemize}
A morphism $f: (B, {\cal B}, \beta) \to (A, {\cal A}, \alpha)$ of Herbrand assemblies is a function $f: B \to A$ for which there is an $n \in \pca{P}$ such that
\begin{itemize}
\item for all $m \in !{\cal B}$, the expression $n \cdot m$ is defined and its value belongs to $!{\cal A}$, 
\item and if $b \in B$ and $m \in \beta(b)$, then $n \cdot m \in \alpha(fb)$.
\end{itemize}
We will say that such an $n$ \emph{tracks $f$} or is a \emph{tracking of $f$}. This clearly defines a category: we will denote it by $\HAsm[\pca{P}]$.
\end{defi}

\begin{lemm}{HAsmcartesian}
The category $\HAsm[\pca{P}]$ has finite limits.
\end{lemm}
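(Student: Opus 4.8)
The plan is to produce the three standard building blocks---a terminal object, binary products and equalizers---and then invoke the usual fact that a category with these has all finite limits.

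\emph{Terminal object.} I would take $\mathbf{1}=(\{\ast\},\emptyset,\alpha)$ with $\alpha(\ast)=\{<>\}$. This is a legitimate Herbrand assembly because $!\emptyset=\{<>\}$ (the empty sequence is the only finite sequence with all components in $\emptyset$), so $\{<>\}$ is an inhabited subset of $!\emptyset$ and is trivially upwards closed there. The unique function $B\to\{\ast\}$ is then tracked by the element $\lambda x.<>$: it sends every $m\in !{\cal B}$ to $<>\in !\emptyset$, and in particular sends every $m\in\beta(b)$ into $\alpha(\ast)$.

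\emph{Binary products.} Given $(A,{\cal A},\alpha)$ and $(B,{\cal B},\beta)$, I would let the product have underlying set $A\times B$ and underlying subset ${\cal A}\,\&\,{\cal B}$, and, using the exponential isomorphism $!({\cal A}\,\&\,{\cal B})\cong !{\cal A}\otimes !{\cal B}$ from Section~2, let $\gamma(a,b)$ be the subset of $!({\cal A}\,\&\,{\cal B})$ corresponding to $\{\,{\bf p}xy\,:\,x\in\alpha(a),\ y\in\beta(b)\,\}$. It is inhabited because $\alpha(a)$ and $\beta(b)$ are, and it is upwards closed because that isomorphism is an order-isomorphism which matches the preorder on $!({\cal A}\,\&\,{\cal B})$ with the product preorder on $!{\cal A}\otimes !{\cal B}$, and a product of upwards-closed sets is upwards closed. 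The two projections are tracked by the pca-elements that read off the first, resp.\ second, component of (the image under the iso of) their argument; and if $f\colon C\to A$ is tracked by $p$ and $g\colon C\to B$ by $q$, then $\langle f,g\rangle$ is tracked by the element sending $m$ to the code in $!({\cal A}\,\&\,{\cal B})$ that corresponds to ${\bf p}(p\cdot m)(q\cdot m)$. The universal property then follows immediately from that of the cartesian product of sets.

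\emph{Equalizers.} Given $f,g\colon (B,{\cal B},\beta)\to (A,{\cal A},\alpha)$, I would take $E=\{\,b\in B\,:\,f(b)=g(b)\,\}$, keep the underlying subset ${\cal B}$, and let $\epsilon$ be the restriction $\beta|_E$; inhabitedness and upward closure are inherited pointwise. The inclusion $E\hookrightarrow B$ is tracked by the identity combinator $\mathbf{i}$, and any morphism $h$ equalizing $f$ and $g$ factors uniquely, as a function, through $E$, the factorization being tracked by the very element that tracks $h$. Together with the terminal object and binary products, this gives all finite limits. I do not expect a real obstacle here; the only point needing a little care is checking, in the product case, that the exponential isomorphism of Section~2 genuinely transports the preorder on $!({\cal A}\,\&\,{\cal B})$ to the product preorder (so that $\gamma(a,b)$ is upwards closed), together with the bookkeeping that every tracking used above is given by a single pca-element independent of the assemblies involved---which is exactly what the remark following the exponential isomorphism guarantees. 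One could instead construct pullbacks directly, the pullback of $f\colon B\to A$ and $g\colon C\to A$ being $\{(b,c)\,:\,fb=gc\}$ with data inherited from the product of $B$ and $C$, but the decomposition above seems cleanest.
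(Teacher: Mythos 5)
Your proposal is correct and takes essentially the same route as the paper: terminal object, binary products carried by ${\cal A}\,\&\,{\cal B}$ via the exponential isomorphism, and equalizers by restricting the underlying set and realizability data. The only (immaterial) difference is your terminal object $(\{\ast\},\emptyset,\{<>\})$, where the paper takes $(1,\pca{P},\gamma)$ with $\gamma(\ast)=!\pca{P}$; both are easily seen to be terminal and hence isomorphic.
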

\begin{proof}
The terminal object is $(1, {\cal C}, \gamma)$ with ${\cal C} = \pca{P}$ and $\gamma(*) = !\pca{P}$.

Equalizers as in 
\diag{ (C, {\cal C}, \gamma) \ar@{ >->}[r] & (B, {\cal B}, \beta) \ar@<+1ex>[r]^f \ar@<-1ex>[r]_g & (A, {\cal A}, \alpha) }
can be computed by putting $C = \{ b \in B \, : \, fb = gb \}$, ${\cal C} = {\cal B}$ and $\gamma = \beta \upharpoonright C$. 

A product $(C, {\cal C}, \gamma) = (A, {\cal A}, \alpha) \times (B, {\cal B}, \beta)$ can be constructed by putting
\begin{eqnarray*}
C & = & A \times B, \\
{\cal C} & = & {\cal A} \& {\cal B}, \\
\gamma(a, b) & = & \{ \, {\bf p}ab \, : \, a \in \alpha(a), b \in \beta(b) \, \}.
\end{eqnarray*}
\end{proof}

\begin{lemm}{HAsmregular}
The category $\HAsm[\pca{P}]$ is regular.
\end{lemm}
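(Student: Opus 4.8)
The plan is to verify the three ingredients of regularity: (i) finite limits, which we already have by \reflemm{HAsmcartesian}; (ii) the existence of coequalizers of kernel pairs (equivalently, every morphism factors as a regular epi followed by a mono); and (iii) stability of regular epis under pullback. The strategy is to describe the image factorization concretely and to identify the regular epimorphisms as those tracked maps that are surjective on underlying sets with a ``uniform'' lifting of realizers.

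First I would give the image factorization of a morphism $f\colon (B,{\cal B},\beta)\to(A,{\cal A},\alpha)$. Let $f = m\circ e$ be the usual set-theoretic image factorization through $I = \{fb : b\in B\}\subseteq A$. I would equip $I$ with ${\cal I} = {\cal A}$ and, for $a\in I$, set $\iota(a) = \bigcup_{b\in f^{-1}(a)} \beta'(b)$ where $\beta'(b) = \{\,n\cdot k : k\in\beta(b)\,\}$ for a fixed tracking $n$ of $f$; more precisely one takes the upward closure in $!{\cal A}$ of this union so that $\iota(a)\in{\rm Pow}^{upcl}_i(!{\cal A})$. Then $e$ is tracked by $n$ (it lands in $\iota(ea)$ by construction) and $m$ is tracked by the identity (in fact by $\lambda x.x$ since ${\cal I}={\cal A}$ and $\iota(a)\subseteq\alpha(a)$). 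The monomorphisms here are exactly the injective-on-underlying-sets maps $g$ such that $g$ reflects realizers in the sense that $k\in\gamma(b)$ whenever $k$ lands in $\iota(gb)$ after applying the tracking — but for checking regularity I only need that $m$ above is monic, which is immediate since its underlying function is an inclusion.

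Next I would show that $e$, and more generally any such ``covering'' map, is a regular epimorphism by exhibiting it as the coequalizer of its kernel pair; the kernel pair is computed as a pullback using \reflemm{HAsmcartesian}, and the coequalizer is again computed on underlying sets (the quotient $B/{\sim}$ where $b\sim b'$ iff $eb=eb'$) with realizer-set the union of the $\beta$-values over each equivalence class, closed upward. The verification that this is indeed a coequalizer in $\HAsm[\pca{P}]$ amounts to: given any $h\colon (B,{\cal B},\beta)\to(D,{\cal D},\delta)$ tracked by some $t$ that coequalizes the kernel pair, the induced set function $\bar h$ on the quotient is tracked by $\lambda w.\, t\cdot w_1 * \cdots * t\cdot w_{|w|}$ — i.e., one applies $t$ componentwise to a coded finite sequence of realizers and concatenates, using crucially that $\delta$-values are upward closed in the preorder on $!{\cal D}$ so that it does not matter which representative each component came from. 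This concatenation trick is the same device used repeatedly in the proof of \refprop{Herbrandtripos} and in \reflemm{HAsmcartesian}.

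I expect the main obstacle to be the last point, stability under pullback: given a regular (= cover) epi $e\colon (B,{\cal B},\beta)\to(A,{\cal A},\alpha)$ and an arbitrary $g\colon (A',{\cal A}',\alpha')\to(A,{\cal A},\alpha)$, one forms the pullback $(P,{\cal P},\pi)$ with $P = \{(a',b) : ga'=eb\}$, ${\cal P} = {\cal A}'\&{\cal B}$, $\pi(a',b) = \{{\bf p}xy : x\in\alpha'(a'),\, y\in\beta(b)\}$, and must check that the projection $P\to A'$ is again a cover. Surjectivity on underlying sets is routine, so the work is to produce a uniform tracking witnessing that the realizers of $\pi$ cover those of $\alpha'$ fibrewise; here one combines a tracking of $e$ being a cover (which, given a potential realizer for $\alpha(ea)$, produces an actual realizer in $\beta(b)$ for a suitable $b\in f^{-1}$) with the pairing operator to reconstitute an element of $\pi$ over a given point of $P$. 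The subtlety is purely bookkeeping about the $!(-)$ and $\&$ operations and the order-preserving exponential isomorphism $!({\cal A}'\&{\cal B})\cong{!\cal A}'\otimes{!\cal B}$ noted in Section 2, together with the fact that upward closure makes the component-wise choices irrelevant; no genuinely new idea is needed beyond the concatenation device already in play.
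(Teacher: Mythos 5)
Your factorization is where the argument breaks down. By pushing the realizers of the domain forward along a chosen tracking $n$ of $f$ into $!{\cal A}$ (taking ${\cal I}={\cal A}$ and $\iota(a)=\uparrow_{!{\cal A}}\{\,n\cdot k \,:\, k\in\beta(b),\ b\in f^{-1}(a)\,\}$), you destroy exactly the information that would make the first factor $e$ a regular epimorphism; indeed the construction even depends on which tracking you pick. Concrete counterexample (say with $\pca{P}=K_1$): let $f$ be the identity function from the nno $(\NN,{\cal N},\nu)$ to $(\NN,\{0\},\lambda k.\,!\{0\})$; the codomain is isomorphic to $\nabla\NN$ by \reflemm{useofzero}, and $f$ is tracked by $\lambda m.<0>$. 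Any tracking of $f$ lands in $!\{0\}$, so each $\iota(k)$ is an upward closed set of sequences of $0$'s and hence contains $<0>$ (which lies above every element of $!\{0\}$), for every $k$ simultaneously. Your $e$ is injective on underlying sets, so if it were a regular epi it would coequalize a pair of equal maps and hence be an isomorphism; but a tracking $r$ of the inverse would have to satisfy $r\cdot <0>\in\nu(k)$, i.e.\ the numeral $k$ occurs in the finite sequence $r\cdot <0>$, for all $k$ at once --- impossible. So $e$ is a non-invertible mono, not a regular epi: surjectivity of the underlying function plus trackability only gives an epimorphism in $\HAsm[\pca{P}]$ (this is the same phenomenon that makes $\nabla\NN$ differ from the nno).

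The repair is to let the image carry the realizers of the \emph{domain} rather than their images in $!{\cal A}$: take $C={\rm Im}(f)$, ${\cal C}={\cal B}$ and $\gamma(a)=\bigcup_{b\in f^{-1}(a)}\beta(b)$, which is what the paper does. Then $B\to C$ is tracked by the identity and satisfies your ``uniform lifting'' condition (the identity itself carries any actual realizer in $\gamma(a)$ into $\beta(b)$ for some $b$ in the fibre), while $C\to A$ is a mono tracked by any tracking of $f$. Note that this corrected object is precisely the coequalizer of the kernel pair that you yourself compute in your second step (quotient of $B$ with the union of the $\beta$-values over each class and realizer set ${\cal B}$); had you compared that computation with your $(I,{\cal A},\iota)$ you would have seen the mismatch. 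With this factorization the rest of your plan (such covers are regular epis, and they are stable under the pullback formula of \reflemm{HAsmcartesian}, using the exponential isomorphism and upward closure as you indicate) goes through essentially as in the paper.
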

\begin{proof}
We claim that a map $f: (B, {\cal B}, \beta) \to (A, {\cal A}, \alpha)$ is monic precisely when the underlying function $f$ is injective (this should be clear), and is a cover precisely when there is an element $r \in \pca{P}$ such that
\begin{itemize}
\item for all $n \in !{\cal A}$ the expression $r \cdot n$ is defined and belongs to $!{\cal B}$, and
\item there is for any $a \in A$ and $n \in \alpha(a)$  an element $b \in f^{-1}(a)$ with $r \cdot n \in \beta(b)$.
\end{itemize}
Let us call maps which have these two properties \emph{super epis}. It is not hard to check that super epis are covers and that they are stable under pullback, so the proof will be finished once we show that every map can be factored as a super epi followed by a mono. But if $f: (B, {\cal B}, \beta) \to (A, {\cal A}, \alpha)$ is any map, then it factors through $(C, {\cal C}, \gamma)$ with $C = {\rm Im}(f)$, ${\cal C} = {\cal B}$ and 
$\gamma(a) = \bigcup_{b \in f^{-1}(a)} \beta(b)$. Moreover, the obvious maps $(B, {\cal B}, \beta) \to (C, {\cal C}, \gamma)$ and $(C, {\cal C}, \gamma) \to (A, {\cal A}, \alpha)$ are a super epi and a mono, respectively.
\end{proof}

\begin{lemm}{HAsmlextensive}
The category $\HAsm[\pca{P}]$ has stable sums and coequalizers.
\end{lemm}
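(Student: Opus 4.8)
The plan is to carry out the usual set-level constructions, to equip the results with realizability data in the evident uniform way, and then to reduce disjointness and stability to routine checks of trackings; I will freely use the exponential isomorphism $!({\cal A}\&{\cal B})\cong\,!{\cal A}\otimes\,!{\cal B}$ and the descriptions of monos and of finite limits obtained above. For the coproducts: the initial object is $(\emptyset,\emptyset,\emptyset)$, and it is strict, because a morphism into it forces the domain's underlying set to be empty and every Herbrand assembly on $\emptyset$ is isomorphic to it. The binary coproduct of $(A,{\cal A},\alpha)$ and $(B,{\cal B},\beta)$ is $(A+B,\ {\cal A}\&{\cal B},\ \gamma)$ with
\[
\gamma(\mathrm{inl}\,a)=\{\,{\bf p}xy : x\in\alpha(a),\ y\in\,!{\cal B}\,\},\qquad \gamma(\mathrm{inr}\,b)=\{\,{\bf p}xy : x\in\,!{\cal A},\ y\in\beta(b)\,\};
\]
these sets are inhabited and upwards closed in $!({\cal A}\&{\cal B})$ because $\alpha(a)$ and $\beta(b)$ are and because the exponential isomorphism is order preserving. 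The injection $\mathrm{inl}$ is tracked by $x\mapsto{\bf p}x<>$ (and $\mathrm{inr}$ symmetrically), and if $h$ is tracked by $p$ and $k$ by $q$, then the mediating map $[h,k]$ is tracked by ${\bf p}xy\mapsto(p\cdot x)*(q\cdot y)$, the realizer coming from the wrong summand being absorbed into the output by upwards closure of $\delta(\cdot)$. The mediating morphism is unique because it is already unique as a function, and disjointness is immediate since the pullback of $\mathrm{inl}$ along $\mathrm{inr}$ has empty underlying set and hence is the initial object.

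The step that requires genuine attention is the stability of these coproducts. Given $h:(X,{\cal X},\xi)\to(A+B,{\cal A}\&{\cal B},\gamma)$, I will show that the canonical comparison $h^{-1}(A)+h^{-1}(B)\to X$, built from the pullbacks of the two injections along $h$, is an isomorphism. On underlying sets it is the evident bijection, since $A+B$ is the disjoint union of $A$ and $B$. The pullback of $\mathrm{inl}$ along $h$ is the subobject of $X$ carried by $\{x\in X : hx\in\mathrm{inl}(A)\}$ with realizer set ${\cal X}$ and $\xi$ restricted, and similarly for $\mathrm{inr}$, so the left-hand side is $(X,\ {\cal X}\&{\cal X},\ \gamma')$ for the obvious $\gamma'$; it therefore suffices to track the identity function on $X$ in both directions, from ${\cal X}\&{\cal X}$ to ${\cal X}$ by ${\bf p}uv\mapsto u*v$ and from ${\cal X}$ to ${\cal X}\&{\cal X}$ by $u\mapsto{\bf p}uu$. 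The latter tracker works uniformly, no matter which summand $x$ lies in, since ${\bf p}uu$ meets the defining condition of both $\gamma'(\mathrm{inl}\,x)$ and $\gamma'(\mathrm{inr}\,x)$; and compatibility of the comparison with the injections is clear on underlying functions. I do not expect this to be deep, only to call for care with the preorder $\preceq$ and with upwards closure in $!({\cal A}\&{\cal B})$.

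Finally, the coequalizer of a parallel pair $f,g:(B,{\cal B},\beta)\to(A,{\cal A},\alpha)$ is $(A/{\sim},\ {\cal A},\ \gamma)$, where ${\sim}$ is the equivalence relation on $A$ generated by the pairs $(fb,gb)$ for $b\in B$ and $\gamma([a])=\bigcup_{a'\sim a}\alpha(a')$, which is inhabited and upwards closed as a union of such sets. The quotient map is tracked by the identity combinator and already coequalizes $f$ and $g$ on underlying sets; if $h$ is tracked by $p$ and satisfies $hf=hg$, then $h$ is constant on ${\sim}$-classes, so its factorisation through $A/{\sim}$ is again tracked by $p$, and it is unique since it is unique as a function. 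That these coequalizers are moreover stable under pullback can, if required, be read off from the explicit description of pullbacks above together with the stability of coequalizers in the category of sets; this I would only note briefly.
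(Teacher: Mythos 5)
Your constructions coincide with the paper's: the same binary sum (padding the other component with an arbitrary potential realizer), the same coequalizer (underlying quotient, realizer set unchanged, existence predicate the union of $\alpha$ over the fibre of the quotient map), and an initial object isomorphic to the paper's, and your trackers for the injections, the copairing (absorbing the wrong summand by upwards closure) and the quotient map are exactly right. The paper leaves all verifications implicit, so your extra checks of disjointness and stability of sums are a welcome addition; the two points you assert without proof --- that the pullback of an injection along $h$ is, up to isomorphism, the restricted assembly $(\{x : hx \in A\}, {\cal X}, \xi)$ (the tracker for this isomorphism needs the tracking of $h$ to supply the $\alpha$-component), and the stability of coequalizers --- do go through by the same pairing/concatenation tricks you already use, so the argument is sound.
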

\begin{proof}
The initial object is $(0, \pca{P}, \gamma)$ with $\gamma$ the empty map.

The sum $(C, {\cal C}, \gamma) = (A, {\cal A}, \alpha) + (B, {\cal B}, \beta)$ is given by $C = A + B$, ${\cal C} = {\cal A} \& {\cal B}$, with $\gamma(a) = \{ {\bf p}xy \, : \, x \in \alpha(a), y \in !\pca{B} \} $ and $\gamma(b) = \{ {\bf p}xy \, : \, x \in !\pca{A}, y \in \beta(b) \}$.

A coequalizer
\diag{ (B, {\cal B}, \beta) \ar@<+1ex>[r]^f \ar@<-1ex>[r]_g & (A, {\cal A}, \alpha) \ar@{->>}[r]^q & (C, {\cal C}, \gamma)}
is computed by letting $C$ be the coequalizer in the category of sets, ${\cal C} = {\cal A}$ and $\gamma(c) = \bigcup_{a \in q^{-1}(c)} \alpha(a)$.
\end{proof}

\begin{lemm}{HAsmlccc}
The category $\HAsm[\pca{P}]$ is locally cartesian closed.
\end{lemm}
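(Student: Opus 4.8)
The plan is to reduce, in the standard way, to the construction of dependent products: since $\HAsm[\pca{P}]$ has finite limits by \reflemm{HAsmcartesian}, the category is locally cartesian closed as soon as, for every morphism $f \colon (B,{\cal B},\beta) \to (A,{\cal A},\alpha)$, the pullback functor $f^* \colon \HAsm[\pca{P}]/(A,{\cal A},\alpha) \to \HAsm[\pca{P}]/(B,{\cal B},\beta)$ has a right adjoint $\Pi_f$. So I would fix such an $f$ with a tracking $t_f$, fix an object $g \colon (Y,{\cal Y},\gamma) \to (B,{\cal B},\beta)$ with a tracking $t_g$, and construct $\Pi_f(g)$ by hand.

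For the underlying set I would take the set $Z$ of all pairs $(a,s)$ in which $a \in A$ and $s \colon f^{-1}(a) \to Y$ is a section of $g$ (so $g \circ s = \id_{f^{-1}(a)}$) that is \emph{uniformly tracked}: there is a $c \in \pca{P}$ with $c \cdot m \downarrow$ and $c \cdot m \in !{\cal Y}$ for every $m \in !{\cal B}$, and moreover $c \cdot m \in \gamma(s(b))$ whenever $b \in f^{-1}(a)$ and $m \in \beta(b)$. The structure map $\Pi_f(g) \to (A,{\cal A},\alpha)$ is the projection $(a,s) \mapsto a$. (The restriction to uniformly tracked sections is forced and is the right one: on the one hand $\zeta$ below must take inhabited values, and on the other hand it is precisely the datum that a tracking of a morphism into $\Pi_f(g)$ delivers --- compare the way the underlying set of an exponential of assemblies consists only of the trackable functions.) For the realizability datum I would put ${\cal Z} = {\cal A} \& {\cal C}$, where
\[ {\cal C} = \{ c \in \pca{P} \, : \, (\forall m \in !{\cal B}) \; c \cdot m \downarrow \text{ and } c \cdot m \in !{\cal Y} \} \]
is the ``function type'' already used for the implication in the proof of \reftheo{Herbrandtripos}, and, viewing elements of $!{\cal Z} = !({\cal A} \& {\cal C})$ as pairs ${\bf p}xc$ with $x \in !{\cal A}$ and $c \in !{\cal C}$ via the exponential isomorphism of Section 2,
\[ \zeta(a,s) = \uparrow_{!{\cal Z}} \big\{ \, {\bf p}xc \, : \, x \in \alpha(a) \text{ and } (\exists i \leq |c|)(\forall b \in f^{-1}(a))(\forall m \in \beta(b)) \; c_i \cdot m \in \gamma(s(b)) \, \big\}. \]
This set is inhabited exactly because of the uniform-trackability clause, and upward closed in $!{\cal Z}$ by construction (using that $\alpha(a)$ is upward closed and that $\preceq$ transports the witnessing component $c_i$); the ${\cal A}$-component is what makes the projection into a tracked morphism, including in the degenerate case of an empty fibre, where no realizer of $a$ could otherwise be produced.

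It then remains to exhibit, naturally in $h \colon (W,{\cal W},\omega) \to (A,{\cal A},\alpha)$, a bijection between morphisms $f^*(h) \to g$ over $(B,{\cal B},\beta)$ and morphisms $h \to \Pi_f(g)$ over $(A,{\cal A},\alpha)$. On underlying sets this is the usual correspondence, sending $k \colon W \times_A B \to Y$ (with $g \circ k = \pi_2$) to the map $w \mapsto (h(w),\, b \mapsto k(w,b))$; one checks that the target really lands in $Z$ and is a morphism of Herbrand assemblies, and conversely. In adjunction terms the counit $f^*(\Pi_f(g)) \to g$ is the evident evaluation $((a,s),b) \mapsto s(b)$; for instance it is tracked by a term that, given ${\bf p}({\bf p}xc)n$, returns the concatenation $c_1 \cdot n * \cdots * c_{|c|} \cdot n$, which lands in $\gamma(s(b))$ because one of the $c_i \cdot n$ does and $\gamma(s(b))$ is upward closed. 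The trackings in both directions are obtained by currying and uncurrying with the combinators of $\pca{P}$, together with the order-preserving, pca-represented exponential isomorphism $!({\cal A} \& {\cal C}) \cong !{\cal A} \otimes !{\cal C}$.

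The step I expect to cost the most work is precisely this last bookkeeping. A tracking now has to be converted into one that operates on \emph{coded finite sequences} of realizers and respects the preorder $\preceq$, upward closure, and the interplay of the pairing ${\bf p}$, the concatenation $*$ and the formation of one-element sequences --- that is, one must replay inside the slices, while keeping track of where the ${\cal A}$-component travels, the two ``obtain $r$ from $s$'' and ``obtain $s$ from $r$'' passages used for the adjoints $\exists_f$ and $\forall_f$ in the proof of \reftheo{Herbrandtripos}. Once all the structure maps are known to be tracked, the triangle identities are mere equalities of underlying functions and hold for set-theoretic reasons; the remaining verifications (functoriality in $g$, and that nothing unexpected happens with covers and monos) follow at once from \reflemm{HAsmcartesian} and \reflemm{HAsmregular}.
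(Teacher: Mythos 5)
Your construction is essentially identical to the paper's: the underlying set of tracked sections over each fibre, the realizer type ${\cal A} \,\&\, \{c : (\forall m \in !{\cal B})\, c\cdot m \downarrow \in !{\cal Y}\}$, the predicate pairing an actual realizer of $a$ with a sequence containing a tracker of the section, and the evaluation map realized by concatenating $c_1\cdot n * \cdots * c_{|c|}\cdot n$ all match the paper's $\prod_f(g) = (T, {\cal T}, \tau)$ and its evaluation map $L$. The remaining currying/uncurrying bookkeeping you flag is likewise left implicit in the paper, so the proposal is correct and follows the same route.
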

\begin{proof} Assume $f: (B, {\cal B}, \beta) \to (A, {\cal A}, \alpha)$ and $g: (S, {\cal S}, \sigma) \to (B, {\cal B}, \beta)$ are two morphisms of Herbrand assemblies. The object $\prod_f(g) = (T, {\cal T}, \tau)$ is computed as follows:
\begin{eqnarray*}
T & = & \{ (a \in A, t: B_a \to S) \, : \, gt = \id_{B_a} \mbox{ and } t \mbox{ is tracked} \, \}, \\
{\cal T} & = & {\cal A} \, \&  \, \{  n \in \pca{P} \, : \, (\forall m \in !{\cal B}) \, n \cdot m \downarrow \mbox{ and } n \cdot m \in !{\cal S} \, \}, \\
\tau(a, t) & = & \{  \, {\bf p}(m, <n_1, \ldots, n_k>) \, : \, m \in \alpha(a) \mbox{ and }  \\
& & \mbox{there is an } n_i \mbox{ tracking } t \, \}.
\end{eqnarray*}
The evaluation map $f^*\prod_f(g) \to g: ((a, t), b) \mapsto t(b)$ is tracked by a code for the function $L$ defined by
\[ L({\bf p}(<n_1, \ldots, n_k>, m)) = n_1(m) * \ldots * n_k(m). \]
\end{proof}

\begin{lemm}{HAsmnno}
The category $\HAsm[\pca{P}]$ has a natural numbers object (nno).
\end{lemm}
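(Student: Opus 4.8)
The plan is to write down the natural numbers object explicitly, in the style of the preceding constructions. Let $\mathcal{N} \subseteq \pca{P}$ be the set of numerals, writing $\bar n$ for the numeral of $n$, and put $N = (\N, \mathcal{N}, \nu)$ with
\[
\nu(n) = \uparrow_{!\mathcal{N}} \{ \langle \bar n \rangle \},
\]
so that the potential realizers of $n$ are the finite sequences of numerals and the actual realizers are those sequences which contain $\bar n$; by the definition of $\preceq$ this set is inhabited and upwards closed in $!\mathcal{N}$. Zero is the morphism $1 \to N$ picking out $0 \in \N$, tracked by a code for the constant function with value $\langle \bar 0 \rangle$. The successor $s \colon N \to N$ sends $n$ to $n+1$ and is tracked by a code for the function sending a sequence $\langle b_1, \ldots, b_k \rangle$ to $\langle \mathsf{s} b_1, \ldots, \mathsf{s} b_k \rangle$, where $\mathsf{s}$ computes the successor of a numeral: a sequence of numerals goes to a sequence of numerals, and if some $b_i = \bar n$ then the image contains $\overline{n+1}$, hence lies in $\nu(n+1)$.

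For the universal property, let $(A, \mathcal{A}, \alpha)$ carry a point $a \colon 1 \to A$ tracked by $e_a$ and an endomorphism $h \colon A \to A$ tracked by $e_h$. On underlying sets the only possible mediating map is $f \colon n \mapsto h^n(a(*))$, and since morphisms of Herbrand assemblies are just functions carrying a tracking, this forces uniqueness, and the identities $f \circ 0 = a$ and $f \circ s = h \circ f$ hold because they hold for the underlying functions. It remains to track $f$. Using the primitive recursor available in any pca, fix $\rho \in \pca{P}$ with $\rho \cdot \bar 0 = e_a \cdot \langle\rangle$ and $\rho \cdot \overline{n+1} = e_h \cdot (\rho \cdot \bar n)$; since $e_a$ tracks $a$ and $e_h$ tracks $h$, an easy induction gives $\rho \cdot \bar n \in \alpha(h^n(a(*))) \subseteq {!}\mathcal{A}$ for all $n$. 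Now let $t$ be a code for the function sending $\langle b_1, \ldots, b_k \rangle$ to $(\rho \cdot b_1) * \cdots * (\rho \cdot b_k)$.

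The only step requiring real argument — and, I expect, the only obstacle — is that $t$ tracks $f$. If $m = \langle b_1, \ldots, b_k \rangle \in {!}\mathcal{N}$, then each $b_i$ is a numeral, so each $\rho \cdot b_i$ lies in $!\mathcal{A}$ and hence so does $t \cdot m$. If moreover $m \in \nu(n)$, say $b_i = \bar n$, then $\rho \cdot b_i \in \alpha(h^n(a(*))) = \alpha(f(n))$; the concatenation $t \cdot m$ contains every component of $\rho \cdot b_i$, so $\rho \cdot b_i \preceq t \cdot m$ in $!\mathcal{A}$, whence $t \cdot m \in \alpha(f(n))$ by upward closure of $\alpha(f(n))$. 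This is precisely where the Herbrand phenomenon shows up: a potential realizer of $n$ may be a sequence mentioning several distinct numerals $\bar{n_1}, \ldots, \bar{n_j}$, so $t$ cannot read a single value of $n$ off $m$; instead it computes a candidate realizer of $h^{n_i}(a(*))$ for each $i$ and concatenates them all, and upward closure of the $\alpha$-values, together with closure of $!\mathcal{A}$ under concatenation, guarantees that the single sequence $t \cdot m$ realizes $f$ at each of $n_1, \ldots, n_j$ simultaneously. Everything else (e.g.\ that the terminal object is $(1, \pca{P}, \gamma)$ with $\gamma(*) = {!}\pca{P}$, so a point of $A$ is tracked by something carrying $\langle\rangle$ into $\alpha$ of the chosen element, which is what the base case above uses) is routine bookkeeping.
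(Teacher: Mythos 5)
Your proposal is correct and follows essentially the same route as the paper: the same object $(\N,\mathcal{N},\nu)$ with $\nu(n) = \uparrow_{!\mathcal{N}}\langle \bar n\rangle$, a primitive-recursively defined element (your $\rho$, the paper's $s$) sending each numeral to a realizer of the corresponding value, and a tracking that applies it componentwise and concatenates, appealing to upward closure. You merely spell out the details (uniqueness, the zero and successor trackings, the $\preceq$ argument) that the paper leaves implicit.
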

\begin{proof}
The nno is given by $(\N, {\cal N}, \nu)$ with ${\cal N}$ the collection of Church numerals in $\pca{P}$ and $\nu(n) = \uparrow_{!\cal N} <n>$. For suppose a structure $1 \to (A, {\cal A}, \alpha) \to (A, {\cal A}, \alpha)$ is given and the first map is tracked by $p$ and the second by $q$. Define $s$ by recursion as $s0 = p0$ and $s(n+1) = q(s(n))$, and $t$ as $t(n) = s(n_0) * \ldots *s(n_{|n|})$. Then the canonical map $\N \to A$ will be tracked by $t$.
\end{proof}

To summarise:
\begin{theo}{ModHerbrandtopos}
The category $\HAsm[\pca{P}]$ is a locally cartesian closed regular category with nno and stable colimits.
\end{theo}

\section{Gamma and nabla}

In this section we show that some of the theory developed for modified realizability also applies to Herbrand realizability. In particular, we show that the facts established on pages 281 and 282 of Van Oosten's paper on the modified realizability topos \cite{vanoosten97} hold for the Herbrand topos as well. (Warning: We follow the notation of that paper, rather than that of \cite{vanoosten08}.)

First of all, note that $\HAsm[\pca{P}]$ is a full subcategory of $\HT[\pca{P}]$, because every triple $(A, \pca{A}, \alpha)$ can be considered as an object $(A, =)$ of $\HT[\pca{P}]$ as follows:
\begin{eqnarray*}
\llbracket a = a' \rrbracket & = & \left\{ \begin{array}{cl}
(\alpha(a), \pca{A}) & \mbox{if } a = a' \\
(\emptyset, \pca{A}) & \mbox{otherwise}
\end{array} \right.
\end{eqnarray*}
In addition, we have a functor $\nabla: \Sets \to \HAsm[\pca{P}]$ which sends a set $X$ to the Herbrand assembly $(X, \pca{P}, \phi)$ with $\phi(x) = !\pca{P}$ for all $x \in X$. Taking the composition of these two functors we obtain a functor $\Sets \to \HT[\pca{P}]$ which we will also denote by $\nabla$.

\begin{rema}{warningonnabla} This is \emph{not} the constant objects functor as defined in \cite{vanoosten08} (and denoted $\nabla$ there).
\end{rema}

\begin{prop}{existence}
The functor $\nabla$ has a finite limit preserving left adjoint \[ \Gamma: \HT[\pca{P}] \to \Sets. \] Moreover, $\Gamma\nabla \cong 1$.
\end{prop}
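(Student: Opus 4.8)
The plan is to construct $\Gamma$ explicitly as a global-sections-like functor and then verify the adjunction by hand, using the concrete description of $\HT[\pca{P}]$ as the topos built from the Herbrand tripos. Recall that objects of $\HT[\pca{P}]$ are pairs $(X, {\sim})$ with $X$ a set and ${\sim}: X \times X \to \Sigma$ a partial equivalence relation in the tripos, and that morphisms are tripos-definable functional relations. Given such an object $(X, {\sim})$, I would define $\Gamma(X, {\sim})$ to be the set of \emph{global elements}: equivalence classes of tripos-valid ``singletons'', i.e. those $x \in X$ for which $\llbracket x \sim x \rrbracket$ is a top-like truth value (has an actual realizer) modulo the equivalence $x \approx x'$ iff $\llbracket x \sim x' \rrbracket$ has an actual realizer — or, more precisely and more robustly, $\Gamma(X,{\sim}) = \Hom_{\HT}(1, (X,{\sim}))$, the literal set of global points. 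Since $1 = \nabla 1$ in the topos (as $\phi(*) = !\pca{P}$ already gives every potential realizer an actual one), this is the natural candidate.

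Next I would establish the adjunction $\Gamma \dashv \nabla$. The key computation is that for a set $Y$, a morphism $(X,{\sim}) \to \nabla Y$ in $\HT[\pca{P}]$ is determined by an honest function on global elements, because in $\nabla Y$ every potential realizer is actual and the truth values $\llbracket y = y' \rrbracket$ are $(\emptyset, \pca{P})$ for $y \neq y'$. The crucial lemma is that the functional-relation condition forces the underlying set-function to be well-defined on $\Gamma(X, {\sim})$ and, conversely, any function $\Gamma(X,{\sim}) \to Y$ lifts to a tripos-definable functional relation into $\nabla Y$ — here one uses that the realizability conditions into $\nabla Y$ are \emph{vacuous} on the potential-realizer side, so no uniformity across fibres is needed. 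This should give a natural bijection $\Hom_{\HT}((X,{\sim}), \nabla Y) \cong \Hom_{\Sets}(\Gamma(X,{\sim}), Y)$.

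For finite limit preservation of $\Gamma$: $\Gamma$ manifestly preserves the terminal object since $\Hom(1, 1)$ is a singleton. For pullbacks (equivalently, for binary products and equalizers), I would argue that $\Hom_{\HT}(1, -)$ preserves them because $1$ is, in the relevant sense, ``projective enough'' — more concretely, one checks directly on the tripos-level description that a global element of a product $(X,{\sim}) \times (Y,{\approx})$ is exactly a pair of global elements (the realizer for the pair is built from a pairing operator, and crucially the top truth value of $1$ supplies the needed actual realizers), and similarly a global element of an equalizer is a global element of the domain satisfying the equation. Finally $\Gamma\nabla \cong 1$ is immediate from the adjunction together with $\Gamma$ preserving the terminal object, or directly: $\Gamma\nabla X = \Hom_{\HT}(1, \nabla X) \cong \Hom_{\Sets}(\Gamma 1, X) \cong \Hom_{\Sets}(1, X) \cong X$, naturally in $X$.

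The main obstacle I anticipate is the left-adjoint/naturality verification into $\nabla Y$: one must show that \emph{every} morphism $(X,{\sim}) \to \nabla Y$ factors through the counit $(X,{\sim}) \to \nabla\Gamma(X,{\sim})$, and this requires checking that a tripos functional relation $F \subseteq X \times Y$ into $\nabla Y$ is genuinely total and single-valued on global elements and nothing more — in particular that the realizer data can be discarded on the codomain side. This is where the special feature of the Herbrand tripos (potential realizers into $\nabla$ being unconstrained, so that the ``$!$'' structure imposes no coherence condition) does the work, and it is worth spelling out carefully rather than treating as routine, since it is precisely the phenomenon that later makes $\nabla$ preserve and reflect first-order logic.
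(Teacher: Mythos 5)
Your proof is correct and is essentially the paper's own argument: the paper defines $\Gamma(X,=)$ concretely as $X_0/\sim$ (the elements with an actual realizer of $x=x$, identified when $\llbracket x=x'\rrbracket$ has an actual realizer) and writes down exactly your transpose correspondence, namely $F(x,y)=\llbracket x=x\rrbracket$ if $x\in X_0$ and $f([x])=y$, and $(\emptyset,\llbracket x=x\rrbracket_1)$ otherwise. Your additional identification of $\Gamma$ with the global-sections functor is a harmless variant that indeed makes finite-limit preservation immediate (representable functors preserve all limits, so no appeal to ``projectivity'' of $1$ is needed), and note that the map $(X,=)\to\nabla\Gamma(X,=)$ you call the counit is the unit of $\Gamma\dashv\nabla$.
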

\begin{proof}
As usual, we send an object $(X, =)$ to $X_0/\sim$, where $X_0 = \{ x \in X \, : \, \llbracket x = x \rrbracket_0 \mbox{ inhabited} \}$ with $x \sim x'$ if $\llbracket x = x' \rrbracket_0$ inhabited. In addition, the transpose of a function $f: \Gamma(X, =) \to Y$ is the function $(X, =) \to \nabla Y$ represented by:
\begin{eqnarray*}
F(x, y) & = & \left\{ \begin{array}{cl}
\llbracket x = x \rrbracket & \mbox{if } x \in X_0 \mbox{ and } f([x]) = y, \\
(\emptyset, \llbracket x = x \rrbracket_1) & \mbox{otherwise.} \end{array} \right.
\end{eqnarray*}
Note that therefore the unit $\eta: (X, =) \to \nabla \Gamma (X, =)$ is represented by $H: X \times X_0 \to \Sigma$ with $H(x, [x']) = \llbracket x = x \rrbracket$ if $x \in [x']$, and $(\emptyset, \llbracket x = x \rrbracket_1)$ otherwise.
\end{proof}

\begin{lemm}{doublenegation} 
Let $(A_0, A_1) \in \Sigma$. Then $A_0$ is inhabited iff $(\lnot\lnot(A_0, A_1))_0$ is inhabited; in which case, $<\lambda p. <>> \in (\lnot\lnot(A_0, A_1))_0$.
\end{lemm}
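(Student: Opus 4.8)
The plan is to unfold the definition of $\lnot$ in the Herbrand prealgebra and compute $\lnot\lnot(A_0, A_1)$ explicitly, using the formula for implication given in the proof of \reftheo{Herbrandtripos} together with the fact that $(\emptyset, \emptyset)$ is the bottom element. Recall that $(A_0, A_1) \to (B_0, B_1)$ has first component $\uparrow_{!C_1}\{<c> : c \in C_1 \text{ and } (\forall m \in A_0)\, c \cdot m \in B_0\}$ and second component $C_1 = \{c : (\forall m \in !A_1)\, c \cdot m \downarrow \text{ and } c \cdot m \in !B_1\}$. Taking $(B_0, B_1) = (\emptyset, \emptyset)$, the second component of $\lnot(A_0, A_1)$ becomes $\{c : (\forall m \in !A_1)\, c \cdot m \downarrow \text{ and } c \cdot m \in !\emptyset\}$, i.e. the set of $c$ that are defined on all of $!A_1$ and always output the empty-sequence code $<>$; call this set $D_1$. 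The first component of $\lnot(A_0, A_1)$ is the upward closure in $!D_1$ of $\{<c> : c \in D_1 \text{ and } (\forall m \in A_0)\, c\cdot m \in \emptyset\}$, which is nonempty exactly when some $c \in D_1$ has $A_0 = \emptyset$ — so $\lnot(A_0, A_1)$ has inhabited first component iff $A_0 = \emptyset$ (provided $D_1$ is itself nonempty, which it is, since $\lambda p.<>$ lies in $D_1$).

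The first half of the lemma then follows by applying this observation twice. If $A_0$ is inhabited, then $(\lnot(A_0,A_1))_0 = \emptyset$, so when we negate again we are in the case ``$A_0 = \emptyset$'' and $(\lnot\lnot(A_0,A_1))_0$ is inhabited; conversely, if $A_0 = \emptyset$, then $(\lnot(A_0,A_1))_0$ is inhabited, so $(\lnot\lnot(A_0,A_1))_0$ is the upward closure of a set of singletons $<c>$ with $c$ satisfying $(\forall m \in (\lnot(A_0,A_1))_0)\, c \cdot m \in \emptyset$, which forces $(\lnot(A_0,A_1))_0 = \emptyset$, contradicting what we just established — hence $(\lnot\lnot(A_0,A_1))_0$ is empty. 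Thus $A_0$ inhabited $\iff$ $(\lnot\lnot(A_0,A_1))_0$ inhabited.

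For the explicit witness, I would trace through the computation in the case $A_0$ inhabited. Here $(\lnot(A_0,A_1))_0 = \emptyset$ and $(\lnot(A_0,A_1))_1 = D_1 \ni \lambda p.<>$. Negating once more: the first component of $\lnot\lnot(A_0,A_1)$ is the upward closure in $!E_1$ (where $E_1$ is the second component of $\lnot\lnot(A_0,A_1)$) of $\{<c> : c \in E_1 \text{ and } (\forall m \in (\lnot(A_0,A_1))_0)\, c \cdot m \in \emptyset\}$. Since $(\lnot(A_0,A_1))_0 = \emptyset$ the side condition on $c$ is vacuous, so we only need $c \in E_1$, i.e. $c$ defined on all of $!D_1$ with values in $!\emptyset$; the term $\lambda p.<>$ does the job, so $<\lambda p.<>> \in (\lnot\lnot(A_0,A_1))_0$ as claimed.

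The only mild subtlety — and the step I would be most careful about — is checking the definedness and totality conditions: that $\lambda p.<>$ really is total (true in any pca, since $\lambda p.<>$ is a closed term applied to any argument), that it lands in $!\emptyset$ (which is $\{<>\}$, the code of the empty sequence being the unique element of $!\emptyset$), and that the upward-closure operations do not interfere — but upward closure only enlarges the set, so inhabitedness is preserved, and $<\lambda p.<>>$ is already in the pre-closure set. Everything else is a routine unwinding of the implication formula, and no case distinction on $\pca{P}$ beyond nontriviality is needed.
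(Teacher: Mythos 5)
Your proof is correct and takes essentially the same route as the paper: both unfold $\lnot$ as implication into the bottom element $(\emptyset, \emptyset)$, observe that $(\lnot(A_0,A_1))_0$ is inhabited (with witness $<\lambda p. <>>$) precisely when $A_0$ is empty, and apply this computation twice to obtain both directions of the equivalence together with the explicit realizer. Your extra checks on totality of $\lambda p.<>$ and on upward closure are fine and only make explicit what the paper leaves implicit.
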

\begin{proof}
This follows from the fact that $\lnot(A_0, A_1) = (C_0, C_1)$, where $C_1$ is the set of codes of functions which map elements from $!A_1$ to the empty sequence, and $C_0 = !C_1 - \{ <> \}$ if $A_0$ is empty, and $C_0 = \emptyset$ otherwise. Hence $\lambda p.<> \in C_1$ always and $<\lambda p.<>> \in C_0$ if $A_0$ is empty.
\end{proof}

\begin{prop}{charHerbrandassemblies}
For an object $(X, =)$ in $\HT[\pca{P}]$ the following are equivalent:
\begin{enumerate}
\item $\eta_{(X, =)}$ is a monomorphism.
\item $(X, =)$ is $\lnot\lnot$-separated.
\item $(X, =)$ is isomorphic to a Herbrand assembly.
\end{enumerate}
\end{prop}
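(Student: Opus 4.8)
The plan is to prove the cycle $(3) \Rightarrow (2) \Rightarrow (1) \Rightarrow (3)$, which is the standard pattern for such characterisations in realizability toposes. For $(3) \Rightarrow (2)$, recall that $\lnot\lnot$-separatedness of $(X,=)$ means that the canonical map $(X,=) \to \mathbf{Sep}_{\lnot\lnot}(X,=)$, or equivalently the statement that $\llbracket x = x' \rrbracket \leq \lnot\lnot\llbracket x = x' \rrbracket$ being an isomorphism rather than merely a $\leq$, holds; concretely, $(X,=)$ is $\lnot\lnot$-separated iff $\lnot\lnot\llbracket x = x' \rrbracket \leq \llbracket x = x' \rrbracket$ uniformly in $x, x'$. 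So I would take a Herbrand assembly $(A, {\cal A}, \alpha)$ presented as $(X,=)$ via the recipe of Section 5 and check directly that $\lnot\lnot\llbracket a = a' \rrbracket \leq \llbracket a = a' \rrbracket$; the key observation is Lemma~\ref{doublenegation}: when $a = a'$ the value $\llbracket a = a' \rrbracket = (\alpha(a), {\cal A})$ already has an inhabited first component (since $\alpha(a)$ is inhabited by definition of a Herbrand assembly), so $\lnot\lnot$ adds nothing beyond what a tracker can convert back; and when $a \neq a'$ the actual realizers of $\llbracket a = a' \rrbracket$ are empty, while Lemma~\ref{doublenegation} tells us $(\lnot\lnot(\emptyset, {\cal A}))_0$ is also empty, so the required $\leq$ holds vacuously on actual realizers and is routine on potential realizers.

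For $(2) \Rightarrow (1)$: in any topos, $X$ is $\lnot\lnot$-separated precisely when the unit of the $\lnot\lnot$-sheafification is monic, but here $\eta$ is the unit of $\nabla\Gamma$, not of sheafification, so this implication needs the specific geometry of the Herbrand topos. The plan is to show that $\eta_{(X,=)}$ being monic is equivalent to the same uniform inequality $\lnot\lnot\llbracket x = x' \rrbracket \leq \llbracket x = x' \rrbracket$. Unwinding Proposition~\ref{existence}: $\eta$ is represented by $H(x, [x']) = \llbracket x = x \rrbracket$ if $x \in [x']$ and $(\emptyset, \llbracket x = x \rrbracket_1)$ otherwise, and $\nabla\Gamma(X,=)$ has equality $\llbracket [x'] = [x''] \rrbracket$ equal to top if $[x'] = [x'']$ and bottom-ish otherwise. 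Saying $\eta$ is monic is the assertion $\llbracket x_1 = x_1 \rrbracket \land \llbracket x_2 = x_2 \rrbracket \land H(x_1,c) \land H(x_2,c) \leq \llbracket x_1 = x_2 \rrbracket$ for the generic $c$; since $H(x_i, c)$ carries an inhabited actual-realizer component exactly when $x_i \sim c$, i.e. when $\llbracket x_i = c \rrbracket_0$ is inhabited, and inhabitedness of actual realizers is by Lemma~\ref{doublenegation} the $\lnot\lnot$-condition, this monicity statement translates into exactly $\lnot\lnot\llbracket x_1 = x_2 \rrbracket \leq \llbracket x_1 = x_2 \rrbracket$. Conversely that inequality gives a tracker witnessing monicity.

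For $(1) \Rightarrow (3)$: given that $\eta_{(X,=)}$ is monic, I would build a Herbrand assembly isomorphic to $(X,=)$ by the usual ``image'' construction. Put $A = \Gamma(X,=) = X_0/\!\sim$, let ${\cal A}$ be a fixed partial combinatory algebra carrier — most naturally ${\cal A} = \pca{P}$ — and define $\alpha([x]) = \llbracket x = x \rrbracket_0$, checking this is well-defined (independent of representative, using that $x \sim x'$ yields trackers converting $\llbracket x = x \rrbracket_0$ to $\llbracket x' = x' \rrbracket_0$ and back, so after closing upwards in $!{\cal A}$ the sets agree — or more carefully, take $\alpha([x])$ to be the upward closure in $!\pca{P}$ of the union over representatives) and that it lands in ${\rm Pow}^{upcl}_i(!\pca{P})$, i.e. is inhabited (it is, since $x \in X_0$ means $\llbracket x = x \rrbracket_0$ inhabited) and upward closed (which it is by construction of $\Sigma$). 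Then exhibit mutually inverse morphisms between $(X,=)$ and $(A, \pca{P}, \alpha)$ regarded as an object of $\HT[\pca{P}]$: one direction is essentially $\eta$, the other is a section built using monicity of $\eta$ to recover the original equality predicate. The main obstacle I anticipate is precisely this last step — checking that monicity of $\eta$ is exactly what licenses the reconstruction of $\llbracket x = x' \rrbracket$ from the separated data $\alpha$, and getting the trackers to be uniform; everywhere else the calculations are routine manipulations of the explicit formulas for $\land, \lnot, \lnot\lnot$ in $\Sigma$ together with Lemma~\ref{doublenegation}, which does the real work of identifying ``$A_0$ inhabited'' with the $\lnot\lnot$-closure condition.
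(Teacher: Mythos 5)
The central gap is your ``concrete'' reformulation of $\lnot\lnot$-separatedness as the bare uniform inequality $\lnot\lnot\llbracket x = x' \rrbracket \leq \llbracket x = x' \rrbracket$ in $\Sigma^{X \times X}$. Separatedness (and likewise the monicity of $\eta$) is the validity of the internal formula $x = x \,\land\, x' = x' \,\land\, \lnot\lnot(x = x') \to x = x'$; when this is translated into the tripos the reflexivity conjuncts cannot be dropped, because internal quantification over $(X,=)$ carries the existence predicates. The bare inequality is strictly stronger and is in fact \emph{false} for typical Herbrand assemblies: by \reflemm{doublenegation} and the explicit formula for $\to$, the actual realizers of $\lnot\lnot\llbracket x = x' \rrbracket$ do not depend on $x, x'$ in any useful way (they are generated by $<\lambda p.<>>$ over a set of potential realizers determined by $\llbracket - \rrbracket_1$ alone), so a single tracker $r$ witnessing your inequality would have to send the fixed element $<\lambda p.<>>$ into $\llbracket x = x \rrbracket_0$ for \emph{every} $x$ simultaneously. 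For the natural numbers object $(\NN, {\cal N}, \nu)$ with $\nu(n) = \uparrow_{!{\cal N}}<n>$ this would require one finite sequence containing every numeral. Hence your step $(3)\Rightarrow(2)$ is attempting to prove a false statement (the phrase ``$\lnot\lnot$ adds nothing beyond what a tracker can convert back'' is exactly where it breaks: the tracker has nothing to convert \emph{from}), and the claimed translation in $(2)\Rightarrow(1)$ between monicity of $\eta$ and the bare inequality is incorrect for the same reason. The correct statement, with the conjuncts $x = x$ and $x' = x'$ present, is what the paper proves in its $1 \Rightarrow 2$ step: the actual realizers $b \in \llbracket x = x \rrbracket_0$, $c \in \llbracket x' = x' \rrbracket_0$ are precisely the inputs the tracker needs, while the $\lnot\lnot$ conjunct only certifies (via \reflemm{doublenegation}) that $[x] = [x']$, so that $H(x,[x])$ and $H(x',[x])$ both have actual realizers.

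There is a second, independent problem in your $(1)\Rightarrow(3)$: you leave exactly the nontrivial part (reconstructing the equality predicate, with uniform trackers) as an ``anticipated obstacle'', and your choice ${\cal A} = \pca{P}$ makes it unworkable. To show the comparison map from $(X,=)$ to the assembly is epic (or to build an inverse), you need a tracker defined on \emph{all} of $!{\cal A}$ producing potential realizers of the relevant $(X,=)$-formulas; with ${\cal A} = \pca{P}$ the input is arbitrary junk and such a tracker need not exist. The paper's construction (its $2 \Rightarrow 3$, run from the separatedness realizer $t$) takes ${\cal A} = \bigcup_{x,x'} !\llbracket x = x' \rrbracket_1$ and $\alpha(a) = \uparrow \{ <s> : \exists x, x' \in a,\ s \in \llbracket x = x' \rrbracket_0 \}$, precisely so that every component of a potential realizer can be processed (using symmetry and transitivity to turn a potential realizer of some $x = x'$ into one of $x = x$), and it uses $t$ to verify monicity of the comparison map. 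Your cycle $(3)\Rightarrow(2)\Rightarrow(1)\Rightarrow(3)$ is a reasonable skeleton, but as written the two key steps rest on the wrong separatedness criterion and an unsubstantiated reconstruction; with the corrected internal formula and the paper's choice of ${\cal A}$ and $\alpha$, the argument does go through essentially as in the paper.
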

\begin{proof}
$1 \Rightarrow 2$: Suppose $\eta$ is mono; so
\[ H(x, [z]) \land H(x', [z]) \to x = x' \]
holds. Suppose $<a_1, \ldots, a_n>$ is an actual realizer for this. Furthermore, suppose $b \in \llbracket x = x \rrbracket_0, c \in \llbracket x' = x' \rrbracket_0$ and $d \in \lnot\lnot \llbracket x = x' \rrbracket_0$. Then $b \in H(x, [x])_0, c \in H(x', [x'])_0$ and $[x] = [x']$ by the previous lemma, so 
\[ a_1({\bf p}bc) *  \ldots * a_n({\bf p}bc) \in \llbracket x = x' \rrbracket_0 ; \]
similar for potential realizers. So
\[ x = x \land x' = x' \land \lnot\lnot(x = x') \to x = x' \]
holds and $(X, =)$ is $\lnot\lnot$-separated.

$2 \Rightarrow 3$: Suppose $t$ is an actual realizer of $x = x \land x' = x' \land \lnot\lnot(x = x') \to x = x'$. Define $(A, \pca{A}, \alpha)$ by
\begin{eqnarray*}
A & = & X_0, \\
\pca{A} & = & \bigcup_{x, x' \in X} !\llbracket x = x' \rrbracket_1, \\
\alpha(a) & = & \uparrow_{!\pca{A}} \{ < s > \, : \, (\exists x, x' \in a) \, s \in \llbracket x= x' \rrbracket_0 \}.
\end{eqnarray*}
One easily sees that there is a map $F: (X, =) \to (A, \pca{A}, \alpha)$ defined by 
\begin{eqnarray*}
F(x, a) & = & \left\{ \begin{array}{cl}
\llbracket x = x \rrbracket \land (\alpha(a), \pca{A}) & \mbox{if } x \in a, \\
\llbracket x = x \rrbracket \land (\emptyset, \pca{A}) & \mbox{otherwise.} \end{array} \right.
\end{eqnarray*}
To see that $F(x, a) \land F(x', a) \to x = x'$ is realized (i.e., that $F$ is monic), one uses that if there are actual for realizers $F(x, a)$ of $F(x',a)$, then $\llbracket x = x' \rrbracket_0$ must be inhabited; but then one can compute an element in this set using $t$ and the previous lemma.

To show that $a = a \to (\exists x \in X) \, F(x, a)$ is realized (i.e., to show that $F$ is epic), one needs an element $s \in \pca{P}$ which uniformly in $a \in A$ sends elements from $!\pca{A}$ to elements in
\[ !\bigcup_{x \in X} !F(x, a), \]
in such a way that if $m \in \alpha(a)$, then
\[s \cdot m \in \uparrow \{ <n> \, : \, (\exists x \in X) \, n \in F(x, a)_0 \}. \]
This can be done: for if $k = <k_1, \ldots, k_n> \in !\pca{A}$, then each $k_i$ belongs to some $!\llbracket x = x' \rrbracket_1$. Since this equality is symmetric and transitive, we can compute from $k_i$ an element $tk_i \in !\llbracket x = x \rrbracket_1$. Then ${\bf p}(tk_i, k) \in !F(x, a)_1$ and hence $<{\bf p}(tk_i, k)>_i \in ! \bigcup_{x \in X} !F(x, a)$. If $<k_1, \ldots, k_n>$ also belongs to $\alpha(a)$, then some $k_i$ belongs to $\llbracket x = x' \rrbracket_0$ with $x, x' \in a$. Then ${\bf p}(tk_i, k) \in F(x, a)_0$ and hence $s \cdot m \in  \uparrow \{ <n> \, : \, (\exists x \in X) \, n \in F(x, a)_0 \}$, as desired.

The implication $3 \Rightarrow 1$ is left to the reader.
\end{proof}

\begin{prop}{charnablas}
For an object $(X, =)$ in $\HT[\pca{P}]$ the following are equivalent:
\begin{enumerate}
\item $\eta_{(X, =)}$ is an isomorphism.
\item $(X, =)$ is isomorphic to an object of the form $\nabla Z$.
\item $(X, =)$ is a $\lnot\lnot$-sheaf.
\end{enumerate}
\end{prop}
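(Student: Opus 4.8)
The plan is to prove the cycle $1 \Rightarrow 2 \Rightarrow 3 \Rightarrow 1$, mirroring the structure of the previous proposition but now using the stronger sheaf condition. For $1 \Rightarrow 2$, I would start from the fact that $\eta_{(X,=)}$ is an isomorphism and show that $(X,=)$ is then already of the form $\nabla Z$: concretely, $\nabla\Gamma(X,=)$ is of this shape (with $Z = X_0/\!\!\sim$), and an isomorphism $(X,=) \cong \nabla\Gamma(X,=)$ is exactly what we want. So this implication is essentially immediate from the definition of $\Gamma$ and \refprop{existence}. For $2 \Rightarrow 3$, I would verify directly that each $\nabla Z$ satisfies the sheaf condition for the $\lnot\lnot$-topology. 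The key computational input is \reflemm{doublenegation}: since for $\nabla Z$ every nontrivial equality value is $(!\pca{P},\pca{P})$, one checks that the canonical map from $\nabla Z$ to its $\lnot\lnot$-sheafification is already an isomorphism — the separatedness is clear (a $\nabla$-object is certainly a Herbrand assembly, hence $\lnot\lnot$-separated by \refprop{charHerbrandassemblies}), and the "density" half of the sheaf condition amounts to the observation that from a realizer of $\lnot\lnot(x = x')$ one can recover an actual realizer of $x = x'$, which is precisely the content of \reflemm{doublenegation} in the case where all the relevant $A_1$ components are $\pca{P}$.

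The substantial direction is $3 \Rightarrow 1$. Here I would argue: a $\lnot\lnot$-sheaf is in particular $\lnot\lnot$-separated, so by \refprop{charHerbrandassemblies} we may assume $(X,=)$ is (isomorphic to) a Herbrand assembly $(A,\pca{A},\alpha)$. Now I want to show $\eta$ is not just mono but iso, i.e.\ that the inclusion $(A,\pca{A},\alpha) \hookrightarrow \nabla\Gamma(A,\pca{A},\alpha) = \nabla(A/\!\!\sim)$ is an isomorphism in $\HT[\pca{P}]$ (where for a Herbrand assembly the relation $\sim$ on $A$ is just equality, since every $\alpha(a)$ is inhabited, so $\Gamma(A,\pca{A},\alpha) = A$ and $\nabla\Gamma(A,\pca{A},\alpha) = (A,\pca{P},\phi)$ with $\phi \equiv {!\pca{P}}$). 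Monicity of $\eta$ is already known; for epicity one needs a realizer witnessing that for every $a \in A$, $\eta$ hits $a$ — but the potential realizers on the $\nabla$-side are all of $!\pca{P}$ while on the source side they live in $!\pca{A}$, so the tracking element must, uniformly in $a$, convert an arbitrary element of $!\pca{P}$ into an element of $\alpha(a)$. This is exactly where the sheaf hypothesis must be used: being a $\lnot\lnot$-sheaf should supply, via the density of $\lnot\lnot$ and \reflemm{doublenegation}, a way to "fill in" an actual realizer for $a = a$ — i.e.\ an element of $\alpha(a)$ — given only the trivial information that $a = a$ holds $\lnot\lnot$-locally. Unwinding the sheaf condition $(X,=) \cong ((X,=))_{\lnot\lnot}$ applied to the subobject $\{a\} \hookrightarrow \nabla A$ (or more precisely to the closed/dense factorization of $\eta$) should yield precisely such a uniform filling operation.

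I expect the main obstacle to be $3 \Rightarrow 1$: carefully translating the abstract sheaf condition (closure under $\lnot\lnot$-dense subobjects, or the unique-extension property along $\lnot\lnot$-dense monos) into the concrete statement that $\alpha(a)$ can be uniformly populated from $!\pca{P}$, and in particular identifying the right dense mono to test against. One clean route is to exhibit $\eta_{(A,\pca{A},\alpha)} : (A,\pca{A},\alpha) \to \nabla A$ as a $\lnot\lnot$-dense mono — which follows from \reflemm{doublenegation} since the double negation of "$m \in \alpha(a)$" is inhabited as soon as $\alpha(a)$ is, and $\alpha(a)$ is always inhabited — and then invoke the defining property of sheaves: a sheaf sees every $\lnot\lnot$-dense mono into it as an isomorphism, forcing $\eta$ to be iso. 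With that observation the equivalence closes quickly, so the real work is just verifying that $\eta$ is $\lnot\lnot$-dense, which is again a direct application of \reflemm{doublenegation} together with the description of $\nabla$.
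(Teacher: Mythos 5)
The decisive problem is in your $3 \Rightarrow 1$. The principle you finally invoke --- ``a sheaf sees every $\lnot\lnot$-dense mono into it as an isomorphism'' --- is false, and note that an argument based on it never uses the hypothesis that $(X,=)$ is a sheaf, only that it is separated (to present it as a Herbrand assembly and to see that $\eta$ is a dense mono); if it worked, it would prove that every separated object is a $\nabla$. The Herbrand topos itself refutes it: the natural numbers object $(\NN,{\cal N},\nu)$ is a $\lnot\lnot$-dense subobject of $\nabla\NN$ (density follows from \reflemm{doublenegation}, since every $\nu(n)$ is inhabited), $\nabla\NN$ is a sheaf, and yet the inclusion is not an isomorphism --- compare \refprop{nnoinHerbrandtopos} and the remark that $\nabla\NN$ is a nonstandard model. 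The correct statement puts the sheaf hypothesis on the \emph{domain}: if $m\colon A \to B$ is a dense mono, $A$ is a sheaf and $B$ is separated, then $m$ is an isomorphism, because the identity of $A$ extends along the dense mono $m$ (this is exactly where the sheaf property of $A$ is used) to a retraction $r$ with $rm = 1$, and then $mr = 1$ since $mr$ and $\mathrm{id}_B$ agree on a dense subobject of the separated object $B$. Applied to $\eta\colon (X,=) \to \nabla\Gamma(X,=)$ this is the paper's proof, and the tracking of the retraction $r$ is precisely the ``uniform filling'' element of $\pca{P}$ (converting arbitrary elements of $!\pca{P}$ into actual realizers) that you correctly identified as what epicity of $\eta$ requires; so the vaguer part of your sketch points in the right direction, but the ``clean route'' you settle on does not close the argument.

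Your $2 \Rightarrow 3$ is also thinner than it needs to be: what you propose to verify (separatedness of $\nabla Z$ plus the fact that a realizer of $\lnot\lnot(x=x')$ yields an actual realizer of $x=x'$) is a property of the equality predicate of $\nabla Z$ alone, whereas being a sheaf is an extension property with respect to arbitrary objects: every map into $\nabla Z$ defined on a $\lnot\lnot$-dense subobject of any object must extend. The paper checks exactly this, and it is easy there because all realizers in $\nabla Z$ are trivial: for a dense mono $f\colon (X,=)\to(Y,=)$ and any $g\colon (X,=)\to\nabla Z$, the relation $g\circ f^{-1}$ is already functional. Your $1 \Rightarrow 2$, the reduction of $3 \Rightarrow 1$ to the Herbrand-assembly case, and the appeal to balancedness (mono plus epi implies iso) are fine, but both of the substantive implications need the repairs indicated above.
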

\begin{proof} $1 \Rightarrow 2$ is trivial.

$2 \Rightarrow 3$: One easily checks by hand that if $f: (X, =) \to (Y, =)$ is a dense mono and $g: (X, =) \to \nabla Z$ is any map, then the relation $g \circ f^{-1}$ is actually a function.

$3 \Rightarrow 1$: If $(X, =)$ is a $\lnot\lnot$-sheaf, then it is certainly separated. Hence $\eta_{(X,=)}$ is monic, by the previous proposition; as it is also dense, it follows that it has a left inverse. But then it is not hard to see that it must be an isomorphism.
\end{proof}

We take a closer look at the objects in the image of $\nabla$. 

\begin{lemm}{useofzero}
If $(A, \pca{A}, \alpha)$ is a Herbrand assembly and there is an element $e \in \pca{P}$ such that $e \in \pca{A}$ and $<e> \in \alpha(a)$ for all $a \in A$, then $(A, \pca{A}, \alpha) \cong \nabla A$.
\end{lemm}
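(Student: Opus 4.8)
The plan is to exhibit the isomorphism as the identity function on the underlying set $A$ in both directions, and merely to check that each direction is tracked; since the two composites are then literally the identity functions, which are trivially tracked, this gives the required isomorphism in $\HAsm[\pca{P}]$. So the whole argument reduces to producing two elements of $\pca{P}$.

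First I would check that $\id_A$, regarded as a function $(A, \pca{A}, \alpha) \to \nabla A = (A, \pca{P}, \phi)$ with $\phi(a) = !\pca{P}$, is a morphism of Herbrand assemblies. This is immediate: take for the tracking $n$ any code for the identity combinator. Then for $m \in !\pca{A}$ we have $n \cdot m = m$, which lies in $!\pca{P}$ because $\pca{A} \subseteq \pca{P}$; and if $m \in \alpha(a)$ then $n \cdot m = m \in !\pca{A} \subseteq !\pca{P} = \phi(a)$. So the first direction requires no hypothesis at all.

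The content of the lemma sits in the other direction, namely that $\id_A$, regarded as a function $\nabla A \to (A, \pca{A}, \alpha)$, is tracked, and here I would use the assumption on $e$. Let $n$ be a code for the constant function $m \mapsto \ <e>$. Since $e \in \pca{A}$ we have $<e> \in !\pca{A}$, so $n \cdot m \in !\pca{A}$ for every $m$; and since $<e> \in \alpha(a)$ for every $a \in A$, we in particular have $n \cdot m \in \alpha(a)$ whenever $m \in \phi(a)$ (a condition which is vacuous, as $\phi(a) = !\pca{P}$). Hence $n$ tracks this map.

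Finally, both morphisms just described have underlying function $\id_A$, so their composites in either order are $\id_A$ on the respective assembly, tracked by a code for the identity combinator; therefore they are mutually inverse, and $(A, \pca{A}, \alpha) \cong \nabla A$. There is no real obstacle in this proof; the only point worth spelling out is that the hypothesis supplies precisely a \emph{uniform} realizer $<e>$ witnessing membership in every $\alpha(a)$, which is exactly what allows the backward map to be tracked by a single element of $\pca{P}$ independent of $a \in A$.
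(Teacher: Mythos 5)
Your proof is correct and is precisely the argument the paper has in mind (its proof is simply ``Obvious''): the isomorphism is the identity on $A$, tracked one way by the identity combinator and the other way by the constant map with value $<e>$, using the uniformity of the hypothesis. Nothing is missing.
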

\begin{proof}
Obvious.
\end{proof}

\begin{theo}{nablaandlogic} The functor $\nabla: \Sets \to \HT[\pca{P}]$  preserves and reflects the structure of a locally cartesian closed pretopos. In particular, it preserves and reflects validity of first-order formulas.
\end{theo}
\begin{proof}
One easily checks by hand that $\nabla$ preserves and reflects quotients of equivalence relations. Therefore it suffices to prove that the functor $\nabla: \Sets \to \HAsm[\pca{P}]$ preserves and reflects the structure of a  locally cartesian closed regular category with sums, because this structure is preserved and reflected by the inclusion $\HAsm[\pca{P}] \to \HT[\pca{P}]$. But then the result follows immediately from the constructions we gave in Section 4 and the previous lemma. 
\end{proof}

We have just seen that $\nabla: \Sets \to \HT[\pca{P}]$ preserves and reflects first-order logic. But it does not preserve the natural numbers object, because:

\begin{prop}{nnoinHerbrandtopos}
The Herbrand assembly $(\NN, {\cal N}, \nu)$ is the natural numbers object in the Herbrand topos.
\end{prop}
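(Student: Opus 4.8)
The plan is to lean on Lemma~\ref{HAsmnno} as much as possible. That lemma already shows that $\bar N := (\NN,{\cal N},\nu)$, together with $0\colon 1\to\bar N$ and $\mathsf{succ}\colon\bar N\to\bar N$, is a natural numbers object in $\HAsm[\pca{P}]$. Viewing $\bar N$ as the object $(\NN,=)$ of $\HT[\pca{P}]$ via the embedding $\HAsm[\pca{P}]\hookrightarrow\HT[\pca{P}]$ of the preceding section, and $0,\mathsf{succ}$ as the induced morphisms of the topos, what remains is to verify the universal property against an \emph{arbitrary} object of $\HT[\pca{P}]$, not just an assembly. So I would fix an object $(X,=)$ of $\HT[\pca{P}]$ equipped with $a\colon 1\to(X,=)$ and $f\colon(X,=)\to(X,=)$, write $A\colon X\to\Sigma$ and $F\colon X\times X\to\Sigma$ for the (relevant components of the) functional relations representing $a$ and $f$, and aim for a unique $h\colon\bar N\to(X,=)$ with $h\circ 0=a$ and $h\circ\mathsf{succ}=f\circ h$.

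For existence I would define, by external recursion on $n\in\NN$, a family $R_n\colon X\to\Sigma$ by $R_0:=A$ and $R_{n+1}:=F\circ R_n$, where the right-hand side is the composite of functional relations, computed through the Heyting operations and the existential quantifier on $\Sigma$ exhibited in the proof of Theorem~\ref{Herbrandtripos}; concretely $R_{n+1}(x)=\bigvee_{x'\in X}\bigl(R_n(x')\wedge F(x',x)\bigr)$. Writing $R(n,x)=R_n(x)$, once one knows that $R$ determines a morphism $h\colon\bar N\to(X,=)$ the two equations follow essentially by construction, using only that $R_n(x)\leq\llbracket n=n\rrbracket_{\bar N}$. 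The real content is to check that $R$ satisfies the four requirements on a functional relation $\bar N\to(X,=)$ — strictness, relationality, single-valuedness and totality — and, crucially, with realizers \emph{uniform in $n$}; on the $\bar N$-side these conditions are trivial since $\llbracket n=m\rrbracket_{\bar N}$ has empty actual part for $n\ne m$, so the content is entirely on the $(X,=)$-side and is inherited from $A$ and $F$. This is where I would reuse the mechanism in the proof of Lemma~\ref{HAsmnno}: from trackings for $a$ and $f$, and from the combinators witnessing that $A$ and $F$ are functional relations, one builds by recursion inside $\pca{P}$ codes $s(n)$ witnessing the $n$th instance of each condition, and then a single code valid uniformly over $\bar N$ is obtained as in Lemma~\ref{HAsmnno}, by observing that a realizer (actual or potential) of $\llbracket n=n\rrbracket_{\bar N}=\nu(n)$ is a finite sequence of Church numerals containing $n$, running the recursion on every entry of that sequence, and concatenating the outputs — legitimately, since the actual-realizer sets in play are upward closed.

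For uniqueness I would argue by induction on $\bar N$. Given $h,h'\colon\bar N\to(X,=)$ both satisfying the two equations, form their equalizer $e\colon E\hookrightarrow\bar N$ in $\HT[\pca{P}]$; then $0$ factors through $E$, and the computation $h\circ(\mathsf{succ}\circ e)=f\circ(h\circ e)=f\circ(h'\circ e)=h'\circ(\mathsf{succ}\circ e)$ shows that $E$ is closed under $\mathsf{succ}$. Since $E$ is a subobject of the $\lnot\lnot$-separated object $\bar N$, it is itself $\lnot\lnot$-separated, hence by Proposition~\ref{charHerbrandassemblies} isomorphic to a sub-Herbrand-assembly of $\bar N$; and $\bar N$ satisfies the induction principle with respect to such subobjects — a sub-Herbrand-assembly of $\bar N$ containing $0$ and closed under $\mathsf{succ}$ is all of $\bar N$, by the same recursion-in-$\pca{P}$ argument as in the proof of Lemma~\ref{HAsmnno} (this also follows abstractly from $\bar N$ being the natural numbers object of the locally cartesian closed category $\HAsm[\pca{P}]$). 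Therefore $e$ is an isomorphism and $h=h'$.

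The step I expect to be the genuine obstacle is the uniformity in $n$ of the realizers for $R$ in the existence part: proving the four conditions by a straightforward induction on $n$ yields realizers that depend on $n$, whereas a functional relation must be witnessed by a single element of $\pca{P}$. This is exactly the difficulty that the construction in Lemma~\ref{HAsmnno} was designed to overcome — by internalizing the recursion in $\pca{P}$ and exploiting both the Church-numeral presentation of $\nu$ and upward closure — and I expect the same device to dispatch it here, the remainder being routine computation with the operations on $\Sigma$ from the proof of Theorem~\ref{Herbrandtripos}.
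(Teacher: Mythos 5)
Your uniqueness argument is fine; the gap is in the existence half, at exactly the step you flag as the ``genuine obstacle'' and then claim is dispatched by the device of \reflemm{HAsmnno}. First, a smaller point: the assertion that the conditions are trivial on the $\NN$-side because $\llbracket n=m\rrbracket$ has empty actual part for $n\ne m$ overlooks strictness, whose $\NN$-component is $R(n,x)\leq\llbracket n=n\rrbracket=(\nu(n),{\cal N})$: a single element of $\pca{P}$ must extract, from an actual realizer of your $R_n(x)$, a sequence of Church numerals containing $n$. With $R_n$ defined as the $n$-fold composite of $A$ with $F$, a realizer of $R_n(x)$ carries no recoverable trace of $n$ (indeed ``depth'' is not even well defined from the realizer: for $X=\nabla Z$ every actual realizer of $R_1(x)$ is already an actual realizer of $R_0(z)$ at the base point $z$), so this is unjustified; it could be repaired by conjoining $(\nu(n),{\cal N})$ to $R_n(x)$. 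The serious problem is that the uniformization trick of \reflemm{HAsmnno} does not transfer. It works there because the target is an assembly: the potential side is one global set $!{\cal A}$, so the recursion $s$ is defined on every numeral and always lands in $!{\cal A}$, and one may run it on every entry of the input and concatenate, with upward closure absorbing the junk. For a general $(X,=)$ the potential parts of $R(n,x)$, and hence of $\exists_x R(n,x)$, depend on $n$. Look at totality: the realizer receives an arbitrary element of $!{\cal N}$ --- the same set for every $n$, not containing $n$ when the input is merely potential, and containing several numerals at once when it is actual (e.g.\ $<n,n'>$ is actual for both $\llbracket n=n\rrbracket$ and $\llbracket n'=n'\rrbracket$, so one output must be actual for both instances) --- and must produce an element of $!(\exists_x R(n,x))_1$ for each relevant $n$. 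Running the recursion on a wrong entry $n_j\ne n$ yields data in the depth-$n_j$ potential sets, not the depth-$n$ ones, so the concatenation need not be a potential realizer at all; upward closure is closure \emph{inside} $!(\exists_x R(n,x))_1$ and cannot absorb entries lying outside it. Symmetrically, for strictness, relationality and single-valuedness the realizer must converge on \emph{all} potential inputs, whereas your recursion-built witnesses $s(n_j)$ are only guaranteed to converge on depth-$n_j$ data. So ``the same device'' does not close the argument; a direct proof requires redesigning $R$ so that its potential part is independent of $n$ (say, by tagging each potential realizer with the depth it certifies and building the conditions around the tags), and that is the real work, not routine computation.

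This is precisely the difficulty the paper's proof is engineered to avoid: it never verifies the universal property against an arbitrary object. Since the successor on $(\NN,{\cal N},\nu)$ is monic, \cite[Corollary D5.1.3]{johnstone02b} gives that the topos has a natural numbers object and that it is the \emph{least} subobject of $(\NN,{\cal N},\nu)$ containing $0$ and closed under successor; such a subobject is separated, hence a Herbrand assembly, and since $(\NN,{\cal N},\nu)$ is the nno of $\HAsm[\pca{P}]$ it admits no proper subobject of this kind. Note also that the other standard realizability route --- reduce to assemblies by covering an arbitrary object with one --- is unavailable here (\refcoro{noexreg} shows it already fails for $K_1$), which is a further sign that the existence half against arbitrary objects of $\HT[\pca{P}]$ is where the genuine content of the proposition lies. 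Your uniqueness paragraph (equalizer, separatedness of subobjects of separated objects, \refprop{charHerbrandassemblies}, and induction for the nno of the assemblies) is correct, but it is only half of the statement.
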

\begin{proof}
Since the successor map $s: (\NN, {\cal N}, \nu) \to (\NN, {\cal N}, \nu)$ is monic in the Herbrand topos, the nno there will be the smallest subobject of $(\NN, {\cal N}, \nu)$ containing 0 and closed under this map (see, for example, \cite[Corollary D5.1.3]{johnstone02b}). Because any subobject of a separated object is automatically separated, this smallest subobject is the object $(\NN, {\cal N}, \nu)$ itself.
\end{proof}

Therefore $\nabla \NN$ is a nonstandard model of arithmetic in the Herbrand topos. It is this model which underlies the realizability interpretation of nonstandard arithmetic defined in Section 5 of \cite{bergbriseidsafarik12}. 

\section{Projectives}

In this section we study the projective objects in the Herbrand assemblies and the Herbrand topos.

\begin{defi}{partherbassembly} We call a Herbrand assembly $(A, {\cal A}, \alpha)$ \emph{partitioned}, if for every $a \in A$ there is an element $g_a \in \pca{P}$ such that
\[ \alpha(a) = \uparrow_{!\cal A} <g_a>. \]
\end{defi}

We first note that:

\begin{lemm}{easyfactsaboutpartassemblies} We have:
\begin{enumerate}
\item The natural numbers object $(\NN, {\cal N}, \nu)$ is partitioned.
\item Every object of the form $\nabla Z$ is isomorphic to a partioned Herbrand assembly and hence every Herbrand assembly is a subobject of a partitioned one.
\item Partitioned Herbrand assemblies are closed under finite limits.
\item Every Herbrand assembly can be covered by a partitioned one.
\item Every retract of a partitioned Herbrand assembly is isomorphic to a partitioned Herbrand assembly.
\end{enumerate}
\end{lemm}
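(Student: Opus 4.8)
The plan is to treat the five items in the order listed, in each case writing down an explicit witness and appealing to the constructions of Section~4 together with \reflemm{useofzero}. Item~(1) is immediate: taking $g_n$ to be the $n$th Church numeral, the equation $\nu(n)=\uparrow_{!{\cal N}}<g_n>$ is literally the definition of the natural numbers object. For item~(2), given a set $Z$ I would consider the Herbrand assembly $(Z,\pca{P},\zeta)$ with $\zeta(z)=\uparrow_{!\pca{P}}<e>$ for a fixed element $e\in\pca{P}$; this is partitioned, and \reflemm{useofzero} identifies it with $\nabla Z$. The ``hence'' clause then follows because for any Herbrand assembly $(A,{\cal A},\alpha)$ the identity map on $A$ is a morphism $(A,{\cal A},\alpha)\to\nabla A$, tracked by $\lambda m.<>$, and this morphism is monic since its underlying function is injective; composing with the isomorphism just produced exhibits $(A,{\cal A},\alpha)$ as a subobject of a partitioned assembly.

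For item~(3) I would use that finite limits can be assembled from the terminal object, binary products and equalizers. The terminal object is $\nabla 1$, hence isomorphic to a partitioned assembly by~(2). An equalizer, as computed in \reflemm{HAsmcartesian}, retains the realizer set of its domain and merely restricts the realizer function to a subset of the underlying set, so it is partitioned whenever the domain is. The only step requiring genuine work is the binary product: the product computed in \reflemm{HAsmcartesian} carries realizer set ${\cal A}\,\&\,{\cal B}$, and there $\gamma(a,b)$ is the upward closure of a \emph{two}-element sequence, so it is not literally partitioned. Instead I would verify that the triple $(A\times B,{\cal A}\otimes{\cal B},\delta)$ with $\delta(a,b)=\uparrow_{!({\cal A}\otimes{\cal B})}<{\bf p}g_ah_b>$, equipped with the evident two projections, satisfies the universal property of the product: from trackings $p,q$ of a pair of maps into the two factors one tracks the pairing by building, out of $p\cdot m$ and $q\cdot m$, the sequence of all pairs ${\bf p}uv$ with $u$ ranging over $p\cdot m$ and $v$ over $q\cdot m$. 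This triple is manifestly partitioned, and since limits are unique up to isomorphism, closure of the partitioned assemblies under finite limits (up to isomorphism) follows.

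For item~(4), given $(A,{\cal A},\alpha)$ I would set $B=\{(a,n):a\in A,n\in\alpha(a)\}$, take ${\cal B}=!{\cal A}$, and $\beta(a,n)=\uparrow_{!{\cal B}}<n>$, which is partitioned with $g_{(a,n)}=n$. The first projection $(B,{\cal B},\beta)\to(A,{\cal A},\alpha)$, $(a,n)\mapsto a$, is tracked by the map that concatenates a sequence of ${\cal A}$-sequences into a single ${\cal A}$-sequence --- the upward closure of $\alpha(a)$ makes the realizer condition go through --- and it is a super epi with witness $\lambda m.<m>$, hence a cover by \reflemm{HAsmregular}. For item~(5), suppose $(A,{\cal A},\alpha)$ is a retract of a partitioned $(B,{\cal B},\beta)$ via $i$ and $r$ with $r\circ i=\id$, tracked by $p$ and $q$ respectively. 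The idea is to transport the partitioned structure along the section: set $\alpha'(a)=\beta(i(a))$, so that $(A,{\cal B},\alpha')$ is partitioned. The identity on $A$ is then a morphism $(A,{\cal A},\alpha)\to(A,{\cal B},\alpha')$ tracked by $p$ and a morphism $(A,{\cal B},\alpha')\to(A,{\cal A},\alpha)$ tracked by $q$ (this is where $r\circ i=\id$ is used); since a morphism of Herbrand assemblies is determined by its underlying function, the two are mutually inverse and $(A,{\cal A},\alpha)\cong(A,{\cal B},\alpha')$.

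The main obstacle is the product case of item~(3): the remaining items are essentially a matter of unwinding the definitions of Section~4 and invoking \reflemm{useofzero}, whereas there one has to give up the ${\cal A}\,\&\,{\cal B}$-presentation of the product and re-establish the universal property for the ${\cal A}\otimes{\cal B}$-presentation, carefully keeping track of how sequences of realizers behave under the preorder~$\preceq$. The transport observation in item~(5) is the only other step that is not purely mechanical, but it is short.
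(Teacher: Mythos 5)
Your proof is correct and follows essentially the same route as the paper: the same $({\cal A}\otimes{\cal B},\,\uparrow<{\bf p}g_ah_b>)$ presentation of the product in item (3), the same $(\{(a,n):n\in\alpha(a)\},\,!{\cal A},\,\uparrow<n>)$ cover in item (4), and the same transport of the partitioned structure along the section ($\alpha'=\beta\circ i$) in item (5). You merely spell out the trackings and the universal-property check that the paper leaves to the reader, which is fine.
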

\begin{proof}
Items 1 and 2 are obvious, so we concentrate on the others.

It is clear that the terminal object in the category of Herbrand assemblies is partitioned and that partitioned Herbrand assemblies are closed under equalizers. Moreover, if $(A, {\cal A}, \alpha)$ and $(B, {\cal B}, \beta)$ are partitioned Herbrand assemblies, then the following partitioned Herbrand assembly is a product in the category of Herbrand assemblies:
\begin{eqnarray*}
C & = & A \times B, \\
{\cal C} & = & {\cal A} \otimes {\cal B}, \\
\gamma(a, b) & = & \uparrow_{!({\cal A} \otimes {\cal B})} <{\bf p}g_ag_b>.
\end{eqnarray*}

A Herbrand assembly $(A, {\cal A}, \alpha)$ can be covered by the partioned Herbrand assembly $(A', {\cal A'}, \alpha')$, where
\begin{eqnarray*}
A' & = & \{ (a, n) \in A \times \pca{P} \, : \, a \in A, n \in \alpha(a) \}, \\
{\cal A'} & = & !{\cal A} \mbox{ and } \\
\alpha'(a, n) & = & \uparrow_{!!{\cal A}} <n>,
\end{eqnarray*}
via the projection. The easy details are left to the reader.

If $(A, {\cal A}, \alpha)$ is a retract of the partitioned Herbrand assembly $(B, {\cal B}, \beta)$ via maps $f: A \to B$ and $g: B \to A$ with $gf = \id$, then $(A, {\cal A}, \alpha)$ is isomorphic to the partioned assembly $(A, {\cal B}, \beta f)$.
\end{proof}

\begin{prop}{partHassemblproj}
Partitioned Herbrand assemblies are projective in the category of Herbrand assemblies, both internally and externally.
\end{prop}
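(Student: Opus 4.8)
Writing $P=(A,\mathcal A,\alpha)$ for the given partitioned assembly, so that $\alpha(a)=\,\uparrow_{!\mathcal A}\langle g_a\rangle$ for suitable $g_a\in\pca{P}$, my plan is to treat the external and internal cases in parallel, exploiting in both the single \emph{distinguished} potential realizer $\langle g_a\rangle$ of each point $a$, and using throughout that the covers in $\HAsm[\pca{P}]$ are the super epis (\reflemm{HAsmregular}). For external projectivity, let $q\colon(B,\mathcal B,\beta)\twoheadrightarrow(C,\mathcal C,\gamma)$ be a cover, tracked as a super epi by $r$, and let $f\colon P\to(C,\mathcal C,\gamma)$ be a morphism, tracked by $s$; I want a lift $h\colon P\to(B,\mathcal B,\beta)$ of $f$ along $q$. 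Since $\langle g_a\rangle\in\alpha(a)$ we get $s\cdot\langle g_a\rangle\in\gamma(fa)$, hence, $q$ being a super epi, some $b\in q^{-1}(fa)$ with $r\cdot(s\cdot\langle g_a\rangle)\in\beta(b)$; choosing such a $b=ha$ for each $a$ (using the axiom of choice in the metatheory) yields a function with $q\circ h=f$. I would track $h$ by a code for the map taking $\langle m_1,\dots,m_k\rangle\in!\mathcal A$ to the concatenation $r(s\langle m_1\rangle)*\cdots*r(s\langle m_k\rangle)$: this lands in $!\mathcal B$, and if $\langle m_1,\dots,m_k\rangle\in\alpha(a)$ then $g_a$ occurs as some $m_j$, so the $j$-th block equals $r(s\langle g_a\rangle)\in\beta(ha)$, and since $\beta(ha)$ is upwards closed in $!\mathcal B$ and the concatenation dominates that block, the concatenation itself lies in $\beta(ha)$.

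For internal projectivity I would show that $(-)^{P}$ preserves covers. By local cartesian closedness the exponential $Y^{P}$ is computed as in \reflemm{HAsmlccc}, with $f$ there taken to be the unique map from $P$ to the terminal object; concretely, a point of $Y^{P}$ is a morphism $v\colon P\to Y$, a potential realizer of such a $v$ is — modulo the pairing isomorphisms of Section~2 — a finite list $\langle y_1,\dots,y_k\rangle$ of elements of $\pca{P}$ at least one of which tracks $v$, and $q^{P}$ acts by post-composition with $q$. So let $q\colon(S,\mathcal S,\sigma)\twoheadrightarrow(S',\mathcal S',\sigma')=:Y$ be a cover tracked by $r$, and let $v$ be a point of $Y^{P}$ with potential realizer $\langle y_1,\dots,y_k\rangle$; I want to lift $v$ through $q$ by something uniformly computable from that realizer. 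For each $a$, one of the $y_i$ tracks $v$, whence $y_i\cdot\langle g_a\rangle\in\sigma'(va)$ for that $i$; running $r$ on each $y_i\cdot\langle g_a\rangle$ and choosing (again in the metatheory) some $u(a)\in q^{-1}(va)$ into whose $\sigma$-value the good contribution falls, I obtain $u\colon P\to(S,\mathcal S,\sigma)$ with $q\circ u=v$, and, by the same upwards-closure argument as above, $u$ is tracked by a code for the map taking $\langle m_1,\dots,m_\ell\rangle$ to the concatenation, over all $i\le k$ and $j\le\ell$, of $r(y_i\cdot\langle m_j\rangle)$. Since this code — hence a potential realizer of $u$ as a point of $(S,\mathcal S,\sigma)^{P}$ — is produced from $\langle y_1,\dots,y_k\rangle$ by one fixed element of $\pca{P}$ depending only on $r$, this exhibits $q^{P}$ as a super epi.

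I expect the internal case to be the main obstacle. Externally, each point carries one \emph{distinguished} potential realizer $\langle g_a\rangle$ to feed through the cover, and upwards-closure of the $\beta(b)$ turns ``a realizer occurs somewhere inside'' into ``this is a realizer''. Internally this already fails at the first level, since a potential realizer of a point $v$ of $Y^{P}$ carries only \emph{some}, a priori unidentified, tracking of $v$ among finitely many candidates; the remedy is to iterate the external trick — process every candidate, concatenate, let the resulting preimage $u$ depend on the realizer, and appeal to upwards-closure a second time to discard the spurious blocks. The genuine work lies in keeping the double indexing straight and in unwinding the exponential of \reflemm{HAsmlccc} far enough to pin down what a potential realizer of a point of $Y^{P}$ really is; the two uses of the axiom of choice in the metatheory, needed only to fix the underlying functions $h$ and $u$, are routine.
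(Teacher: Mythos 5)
Your proof is correct, and its external half is essentially the paper's own argument: feed the distinguished realizer $\langle g_a\rangle$ through the trackings, use choice in the metatheory to fix the underlying function, and track the resulting lift by concatenating the values on singletons, with upwards closure of the target's realizer sets absorbing the junk blocks (the paper phrases this as splitting a cover onto the partitioned assembly rather than lifting along an arbitrary cover, but given pullback stability of covers from \reflemm{HAsmregular} these are the same statement). Where you genuinely diverge is the internal half. The paper disposes of it in one line, deducing internal from external projectivity via the general fact that this implication holds once the terminal object is partitioned (hence projective) -- implicitly the standard argument that tests a cover $q^P$ against partitioned objects $Z$, transposes to $Z\times P$, and uses closure of partitioned assemblies under products; you instead verify directly that $(-)^P$ sends super epis to super epis, by unwinding the exponential of \reflemm{HAsmlccc} and exhibiting a single element of $\pca{P}$ that converts a realizer of a point $v$ of $Y^P$ into a realizer of a (realizer-dependent) lift $u$, processing every candidate tracker and discarding the spurious blocks by upwards closure. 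Your route costs more bookkeeping but is self-contained and makes the uniformity of the lifting explicit, which the paper leaves implicit; you also correctly isolate the one subtle point, namely that the preimage point $u$ may depend on the realizer, exactly as the super-epi condition permits. One terminological slip: what you call a ``potential realizer'' of a point $v$ of $Y^P$ (a list with at least one genuine tracking of $v$) is in the paper's usage an \emph{actual} realizer, the potential ones being arbitrary lists of candidate trackers; fortunately your argument uses the two in the right places -- actual realizers as input to the surjectivity clause, and a construction that sends arbitrary candidate lists to candidate lists -- so nothing breaks.
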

\begin{proof}
We first show that partitioned Herbrand assemblies are externally projective. So suppose $p: (B, {\cal B}, \beta) \to (A, {\cal A}, \alpha)$ is a cover between Herbrand assemblies and $(A, {\cal A}, \alpha)$ is partitioned. Then there is an element $r \in \pca{P}$ such that $r$ is defined on all elements of $!{\cal A}$ and then yields values in $!{\cal B}$, and
\[ (\forall a \in A) \, (\forall n \in \alpha(a)) \, (\exists b \in p^{-1}(a)) \, r \cdot n \in \beta(b). \]
In particular,
\[ (\forall a \in A) \, (\exists b \in p^{-1}(n)) \, r \cdot <g_a> \in \beta(b). \]
Using choice in the metatheory, this means that there is a function $f: A \to B$ such that $pf = \id$ and
\[ (\forall a \in A) \, r \cdot <g_a> \in \beta(fa). \]
Therefore the function $f$ is a section of $p$ tracked by
\[ s \cdot <m_1, \ldots, m_k> = r \cdot <m_1> * \ldots * r \cdot <m_k>. \]
Since the terminal object is partitioned (item 3 of the previous lemma) and therefore externally projective, this implies that externally projective objects are also internally projective. 
\end{proof}

\begin{coro}{regexcompl}
Up to isomorphism, the partitioned Herbrand assemblies are the projective objects in the category of Herbrand assemblies. In particular, this category is equivalent to the reg/lex-completion of its full subcategory on the partitioned Herbrand assemblies.
\end{coro}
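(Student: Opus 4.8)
The plan is to split the corollary into its two assertions and build on what Section~6 has already established. For the first assertion --- that, up to isomorphism, the partitioned Herbrand assemblies are exactly the projective objects --- one direction is precisely Proposition \ref{partHassemblproj}. For the converse, I would take an arbitrary projective $(A, {\cal A}, \alpha)$, cover it by a partitioned Herbrand assembly $p \colon (B, {\cal B}, \beta) \to (A, {\cal A}, \alpha)$ using item~4 of Lemma \ref{easyfactsaboutpartassemblies}, and then use the projectivity of $(A, {\cal A}, \alpha)$ to lift the identity along $p$. This exhibits $(A, {\cal A}, \alpha)$ as a retract of a partitioned Herbrand assembly, whence it is isomorphic to a partitioned one by item~5 of the same lemma. (Here ``projective'' is unambiguous, since internal and external projectivity coincide for these objects by Proposition \ref{partHassemblproj}.)

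For the second assertion, I would invoke the standard characterisation of reg/lex completions: a regular category is equivalent to the reg/lex completion of some category with finite limits precisely when it has enough regular projectives and the full subcategory on the regular projectives is closed under finite limits, and in that case it is the reg/lex completion of that subcategory (see, e.g., \cite{vanoosten08}). It then remains to verify the hypotheses for $\HAsm[\pca{P}]$: regularity is Lemma \ref{HAsmregular}; there are enough projectives because every Herbrand assembly is covered by a partitioned one (item~4 of Lemma \ref{easyfactsaboutpartassemblies}) and partitioned Herbrand assemblies are projective (Proposition \ref{partHassemblproj}); and the projectives, being up to isomorphism the partitioned Herbrand assemblies by the first part of this corollary, are closed under finite limits by item~3 of Lemma \ref{easyfactsaboutpartassemblies} --- using that $\HAsm[\pca{P}]$ has finite limits at all (Lemma \ref{HAsmcartesian}), so that the full subcategory on the partitioned Herbrand assemblies is itself finitely complete. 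The conclusion is that $\HAsm[\pca{P}]$ is equivalent to the reg/lex completion of its full subcategory on the partitioned Herbrand assemblies.

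I expect no serious obstacle here, since every ingredient was prepared in Section~6; the only points demanding attention are the bookkeeping ones --- reading ``enough projectives'' as ``every object is a cover of (i.e., a regular-epi quotient of) a projective'', and noticing that one genuinely needs the closure of the projectives under finite limits, which is exactly why item~3 of Lemma \ref{easyfactsaboutpartassemblies} was singled out. If one prefers not to cite the abstract characterisation, the equivalence can instead be produced by hand: describe the reg/lex completion concretely as the category whose objects are suitable diagrams among partitioned Herbrand assemblies and whose morphisms are compatible maps up to the appropriate equivalence, then use the image factorisation of Lemma \ref{HAsmregular} together with the cover of item~4 to define a comparison functor into $\HAsm[\pca{P}]$ and check that it is full, faithful and essentially surjective.
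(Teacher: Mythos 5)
Your proposal is correct and follows essentially the same route as the paper: the first claim is obtained from Proposition \ref{partHassemblproj} together with items 4 and 5 of Lemma \ref{easyfactsaboutpartassemblies} (cover a projective by a partitioned assembly, lift the identity, and use closure under retracts), and the second by the standard characterisation of reg/lex-completions via enough projectives closed under finite limits, which the paper cites from Carboni. The only cosmetic difference is the reference you give for that characterisation and the optional hands-on construction, neither of which changes the argument.
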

\begin{proof}
The first statement follows from the previous proposition and items 4 and 5 of the previous lemma; the second follows from the characterisation of reg/lex-completions in 
\cite{carboni95}.
\end{proof}

\begin{coro}{nnoprojectiveinassemblies}
The natural numbers object is both internally and externally projective in the category of Herbrand assemblies.
\end{coro}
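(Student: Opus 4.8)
The plan is to obtain this as an immediate consequence of the two results just proved. Recall that, by \reflemm{HAsmnno} and \refthm{ModHerbrandtopos}, the natural numbers object in $\HAsm[\pca{P}]$ is the Herbrand assembly $(\NN, {\cal N}, \nu)$ with ${\cal N}$ the set of Church numerals and $\nu(n) = \uparrow_{!\cal N} <n>$. The key observation is that this is precisely the form of a partitioned Herbrand assembly in the sense of \refdefi{partherbassembly}: for each $n \in \NN$ the set $\nu(n)$ is generated as an upwards-closed subset of $!{\cal N}$ by the single element $<g_n>$ with $g_n$ the $n$th Church numeral. This is exactly item 1 of \reflemm{easyfactsaboutpartassemblies}, so I would just cite that lemma rather than redo the verification.

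With that in hand, the corollary follows at once from \refprop{partHassemblproj}, which states that every partitioned Herbrand assembly is projective in $\HAsm[\pca{P}]$, both internally and externally. Applying this to $(\NN, {\cal N}, \nu)$ gives the claim. So the proof is a one-line composition: $(\NN, {\cal N}, \nu)$ is partitioned (item 1 of \reflemm{easyfactsaboutpartassemblies}), hence projective internally and externally (\refprop{partHassemblproj}).

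There is no real obstacle here; all the content sits in the earlier results. If anything, the only point worth a moment's care is making sure the reader sees why "internally" and "externally" are both covered — but this is already built into the statement of \refprop{partHassemblproj} (whose proof deduces internal projectivity from external projectivity using that the terminal object is partitioned), so nothing further needs to be argued. I would therefore keep the proof to a single sentence referencing item 1 of \reflemm{easyfactsaboutpartassemblies} together with \refprop{partHassemblproj}.
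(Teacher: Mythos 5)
Your proof is correct and is exactly the argument the paper intends: the corollary follows immediately from item 1 of \reflemm{easyfactsaboutpartassemblies} (the nno $(\NN, {\cal N}, \nu)$ is partitioned) together with \refprop{partHassemblproj} (partitioned Herbrand assemblies are internally and externally projective in $\HAsm[\pca{P}]$). The paper gives no separate proof precisely because it is this one-line composition, so nothing more is needed.
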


In contrast, it is not generally true that the natural numbers object is externally projective in the entire topos. Indeed, this fails if we take as our pca $\pca{P}$ Kleene's first pca $K_1$.  To define this pca we need to fix an enumeration of the partial recursive functions satisfying some properties (for the precise properties that one needs, see \cite[pages ix and 15]{vanoosten08}). Then $K_1$ has as underlying set the natural numbers and the partial application $n \cdot m$ is defined to be the result of applying the $n$th partial recursive function to the natural number $m$.

\begin{prop}{nnonotprojectiveinHerbrandtopos}
If $\pca{P} = K_1$, then the natural numbers object is not externally projective in the Herbrand topos.
\end{prop}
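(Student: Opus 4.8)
The plan is to exhibit a cover $q\colon Z\twoheadrightarrow(\NN,{\cal N},\nu)$ in $\HT[\pca P]$ which admits no section. Since the natural numbers object is externally projective in $\HAsm[\pca P]$ by \refcoro{nnoprojectiveinassemblies}, and since the inclusion $\HAsm[\pca P]\hookrightarrow\HT[\pca P]$ preserves and reflects covers (this is part of what is established in the proof that $\nabla$ preserves and reflects the structure of a locally cartesian closed pretopos), the domain $Z$ of any witnessing cover must fail to be $\lnot\lnot$-separated; in fact $Z$ cannot even be covered by a partitioned Herbrand assembly, for if $\pi\colon P\twoheadrightarrow Z$ were such a cover with $P$ partitioned, then $q\pi\colon P\twoheadrightarrow(\NN,{\cal N},\nu)$ would be a cover between assemblies, hence split in $\HAsm[\pca P]$, and composing such a splitting with $\pi$ would split $q$. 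So the construction has to leave the world of assemblies, and the feature of $K_1$ that makes this possible is the existence of a pair $A,B\subseteq\NN$ of disjoint $\Sigma^0_1$-sets which is \emph{recursively inseparable}.

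Concretely, I would let $Z$ have underlying set $\NN\times\{0,1\}$, with $q$ the first projection, and define its equality so that $(n,0)$ has inhabited extent precisely when $n\notin A$, that $(n,1)$ has inhabited extent precisely when $n\notin B$, and that $(n,0)$ and $(n,1)$ name the same element of $Z$ precisely when $n\notin A\cup B$. The defining conditions are all $\Pi^0_1$, hence carry a canonical trivial actual realizer exactly when true; but the realizer witnessing the identification of $(n,0)$ with $(n,1)$ is forced, by transitivity together with the requirement that $q$ be a morphism into $(\NN,{\cal N},\nu)$, to encode $n$. The first things to check are that this data satisfies the symmetry and transitivity required of an object of $\HT[\pca P]$, and that $Z$ is not $\lnot\lnot$-separated: the latter is where \reflemm{doublenegation} enters, since the $\lnot\lnot$-closure of the identification predicate retains only the trivial realizer and hence cannot be converted, uniformly in $n$, back into a realizer encoding $n$.

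It remains to see that $q$ is a cover but has no section. For the first: since $A\cap B=\emptyset$, for every $n$ at least one of $(n,0),(n,1)$ has inhabited extent, and a realizer of $\exists z\in Z\,(qz=n)$ is obtained simply by listing both candidates together with the trivial realizers of their $\Pi^0_1$-conditions — one of the two list entries is always an actual realizer, which is all a Herbrand existential requires, and the list can be produced uniformly (in fact independently of $n$). For the second: a section would be a functional relation $F\colon(\NN,{\cal N},\nu)\to Z$, and totality of $F$ supplies a code which, given a realizer of $n$, returns a realizer of $\exists z\,F(n,z)$; strictness of $F$ forces that, for $n\in A$ (resp.\ $n\in B$), the only actual component of that realizer witnesses the unique fibre point $(n,1)$ (resp.\ $(n,0)$) with inhabited extent, and decoding its $\{0,1\}$-coordinate then produces a total recursive $g$ with $g\upharpoonright A$ constantly $1$ and $g\upharpoonright B$ constantly $0$, i.e.\ a recursive set separating $A$ from $B$, contradicting the choice of $A,B$. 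The step I expect to be the main obstacle is precisely this last read-off: one must arrange the extents of the non-existing fibre points (and the potential realizers attached to them) carefully enough that the ``junk'' potential realizers which $F$ may hang on a non-existing fibre point cannot pad the list and spoil the recursive decoding. This is the point at which the precise form of the Herbrand existential quantifier and of the preorder $\preceq$ on $!\pca P$ has to be brought to bear.
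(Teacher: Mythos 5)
Your route is the paper's route: fix recursively inseparable $A,B\subseteq\NN$, build an object of $\HT[\pca{P}]$ on the underlying set $\NN\times\{0,1\}$ whose fibre over $n$ contains $(n,i)$ according to (non)membership of $n$, observe that the projection to $(\NN,{\cal N},\nu)$ is covered uniformly by listing both candidate fibre points (this is exactly the paper's surjectivity realizer), and conclude that a section would recursively separate $A$ from $B$. But there is a genuine gap, and it is precisely the step you defer at the end. A section in $\HT[\pca{P}]$ is not a tracked map of assemblies; it is an arbitrary strict, relational, single-valued, total functional relation $F$, and its potential realizers at non-existing fibre points are completely unconstrained. Consequently an actual realizer of $\exists z\, F(n,z)$ is a finite list in which only some unidentifiable entry is actual for the true fibre point, and the ``read off the $\{0,1\}$-coordinate'' step -- the only place where recursive inseparability is used, hence the entire content of the proposition -- requires an argument showing how the realizers of strictness, single-valuedness and of $p\circ f=\mathrm{id}$ can be combined so that the junk entries cannot yield the wrong bit. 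You explicitly flag this as ``the main obstacle'' and do not resolve it, so the proposal as it stands does not prove the statement; everything you do carry out (the cover, the motivational remarks about having to leave the assemblies) is the easy part.

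Moreover, your variant of the construction removes structure that is needed exactly at that point. The paper keeps equality on $\NN\times\{0,1\}$ trivial off the diagonal and tags every potential realizer with its bit, $E(n,i)_1=\NN\otimes\{i\}$ and $E(n,i)_0=\uparrow_{!E(n,i)_1}<{\bf p}ni>$ when $(n,i)$ exists, so that whatever the strictness realizer of $F$ produces at a fibre point necessarily carries that point's bit. You instead add identifications $\llbracket(n,0)=(n,1)\rrbracket$ inhabited for $n\notin A\cup B$ and leave the realizer sets unspecified apart from the informal remark that the identification realizer is ``forced to encode $n$'' (which itself needs an argument, via the tracking of symmetry and transitivity of your $E$). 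The extra identifications only weaken the single-valuedness constraint on a would-be section -- the equality predicate it must land in is inhabited more often -- so they make the non-existence argument harder, not easier, and without concrete actual/potential realizer sets the decoding cannot even be set up. Two smaller points: the surjectivity realizer cannot be literally independent of the input, since the actual realizer of the fibre's existence must encode $n$; and your claim that the inclusion $\HAsm[\pca{P}]\hookrightarrow\HT[\pca{P}]$ ``preserves and reflects covers'', used to argue the domain cannot be (covered by) an assembly, is not established in the paper and essentially anticipates \refcoro{noexreg} -- it is harmless as motivation but should not be presented as known.
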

\begin{proof}
Let $A_0$ and $A_1$ be two recursively inseparable subsets of $\NN$ and let $(\NN \times \{ 0, 1 \}, E)$ be the object of the Herbrand topos which has no actual or potential realizers for nontrival equalities, and whose existence predicate $E = \llbracket - = - \rrbracket$ is given by:
\begin{eqnarray*}
E(n, i)_1 & = & \NN \otimes \{ i \}, \\
E(n, i)_0 & = & \left\{ \begin{array}{cl}
\emptyset & \mbox{if } n \in A_{1-i}, \\
\uparrow_{!E(n,i)_1} <{\bf p}ni> & \mbox{otherwise.}
\end{array} \right.
\end{eqnarray*}
Now consider the projection $p: (\NN \times \{0,1\}, E) \to (\NN, {\cal N}, \nu)$ given by
\[ p((n, i), m) = \left\{ \begin{array}{cl} 
E(n,i) \land (\nu(m), {\cal N}) & \mbox{if } n =m, \\
\emptyset & \mbox{otherwise.}
\end{array} \right. \]
This map is surjective, because its surjectivity is realized by the element $s \in \pca{P}$ satisfying
\[ s \cdot <n_1, \ldots, n_k> = <<{\bf p}n_10>, <{\bf p}n_11>, \ldots, <{\bf p}n_k0>, <{\bf p}n_k1>>. \]
But, on the other hand, this map does not have a section, because a realizer $r$ for such a map would allow one to recursively separate the sets $A_0$ and $A_1$ (for $n \in \NN$ compute the second projection of $(r \cdot <n>)_1$; this always yields either 0 or 1 and for $n \in A_i$ it yields $i$).
\end{proof}

\begin{coro}{noexreg}
If $\pca{P} = K_1$, then not every object in the Herbrand topos can be covered by a Herbrand assemby. In particular, the Herbrand topos is not the ex/reg-completion of the category of Herbrand assemblies.
\end{coro}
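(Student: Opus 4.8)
The plan is to argue by contradiction. Assuming that every object of $\HT[K_1]$ can be covered by a Herbrand assembly, I will deduce that the natural numbers object is externally projective in $\HT[K_1]$, contradicting \refprop{nnonotprojectiveinHerbrandtopos}. The second assertion is then immediate: in the ex/reg-completion of any regular category every object is a quotient of — hence is covered by — an object of that category, so if $\HT[K_1]$ were the ex/reg-completion of $\HAsm[K_1]$ then every object of it would be covered by a Herbrand assembly, which is exactly what the first assertion rules out.

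The one ingredient I need beyond the contradiction hypothesis is that the (full) inclusion $\HAsm[\pca{P}] \hookrightarrow \HT[\pca{P}]$ reflects covers: a morphism of Herbrand assemblies that is a cover in $\HT[\pca{P}]$ is already a cover in $\HAsm[\pca{P}]$. I would prove this as follows. Since $\HAsm[\pca{P}]$ is, by \refprop{charHerbrandassemblies}, the full subcategory of $\lnot\lnot$-separated objects, it is closed under subobjects and finite limits in $\HT[\pca{P}]$, so the inclusion preserves and reflects monomorphisms. A short realizer computation — of the same kind used in \refprop{charHerbrandassemblies} to show that the map $F$ there is epic — shows that a super epi of Herbrand assemblies, in the sense of \reflemm{HAsmregular}, is epic, hence (as $\HT[\pca{P}]$ is a topos) a cover, in $\HT[\pca{P}]$. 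Now take any cover $f$ of Herbrand assemblies in $\HT[\pca{P}]$ and factor it in $\HAsm[\pca{P}]$ as a super epi $e$ followed by a mono $m$; then $e$ is still a cover and $m$ still a mono in $\HT[\pca{P}]$, so by uniqueness of the cover–mono factorisation in the regular category $\HT[\pca{P}]$ the mono $m$ is an isomorphism, and $f$, being isomorphic to $e$, is a super epi, i.e. a cover in $\HAsm[\pca{P}]$.

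With this in hand, suppose every object of $\HT[K_1]$ is covered by a Herbrand assembly and let $p \colon Y \to (\NN, {\cal N}, \nu)$ be an arbitrary cover in $\HT[K_1]$, where $(\NN, {\cal N}, \nu)$ denotes the natural numbers object (\refprop{nnoinHerbrandtopos}). Choose a cover $q \colon Z \to Y$ with $Z$ a Herbrand assembly. Since covers compose in the regular category $\HT[K_1]$, the composite $pq \colon Z \to (\NN, {\cal N}, \nu)$ is a cover between Herbrand assemblies in $\HT[K_1]$, hence a cover in $\HAsm[K_1]$ by the ingredient just established. But $(\NN, {\cal N}, \nu)$ is externally projective in $\HAsm[K_1]$ (\refcoro{nnoprojectiveinassemblies}), so $pq$ has a section $t$; then $qt$ is a section of $p$. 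Thus $(\NN, {\cal N}, \nu)$ is externally projective in $\HT[K_1]$, contradicting \refprop{nnonotprojectiveinHerbrandtopos}.

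The one point that needs genuine care, rather than bookkeeping, is the reflection-of-covers ingredient, and inside it the observation that a super epi of Herbrand assemblies is epic in $\HT[\pca{P}]$: the temptation is to conflate covers in $\HAsm[\pca{P}]$ with covers in $\HT[\pca{P}]$, and one has to check one direction by hand. Everything else — closure of the separated objects under finite limits and subobjects, composition of covers, uniqueness of factorisations in a regular category, and the final diagram chase — is formal and follows at once from results already in hand.
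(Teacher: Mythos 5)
Your proof is correct and follows exactly the argument the paper intends (the corollary is stated without proof, as an immediate consequence of \refprop{nnonotprojectiveinHerbrandtopos} together with \refcoro{nnoprojectiveinassemblies}): cover the domain of the sectionless cover of $(\NN,{\cal N},\nu)$ by a Herbrand assembly, use projectivity of the nno in $\HAsm[K_1]$ to split the composite, and derive a contradiction. Your explicit verification that the inclusion $\HAsm[\pca{P}] \hookrightarrow \HT[\pca{P}]$ reflects covers is a point the paper leaves implicit, and you handle it correctly via the super-epi/mono factorisation.
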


\section{Logical features of the Herbrand topos}

In this section we investigate the validity in the Herbrand realizability of some significant logical principles. Such questions are pca dependent and here we restrict attention to the Herbrand topos based on the pca $K_1$. More information on the principles we consider can be found in \cite[Chapter 4]{troelstravandalen88a}.

\begin{lemm}{herbrandcomputable} A function $f: \NN \to \NN$ is tracked (as a morphism $(\NN, {\cal N}, \nu) \to (\NN, {\cal N}, \nu)$ in the Herbrand topos) iff it is bounded by a computable function. Indeed, from a tracking one can compute a bound and vice versa.
\end{lemm}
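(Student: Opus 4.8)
The plan is to unwind the definition of a morphism $(\NN, {\cal N}, \nu) \to (\NN, {\cal N}, \nu)$; since $\HAsm[K_1]$ is a full subcategory of $\HT[K_1]$, this is the same whether we work in the assemblies or in the topos. In $K_1$ the Church numeral $n$ is the number $n$ itself, so ${\cal N} = \NN$, the set $!{\cal N}$ is the set of codes of finite sequences of natural numbers, and $\nu(n) = \uparrow_{!{\cal N}} <n>$ is the set of codes of finite sequences that have $n$ among their components. Hence a code $e$ tracks $f$ exactly when: (i) $e \cdot m$ is defined and codes a finite sequence of naturals for every code $m$ of a finite sequence of naturals; and (ii) whenever $m$ codes a sequence containing $n$, the sequence coded by $e \cdot m$ contains $f(n)$.

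For the left-to-right direction, suppose $e$ tracks $f$. I would feed $e$ the singleton sequences: for each $n$ we have $<n> \in \nu(n)$, so by (i) and (ii) the function $n \mapsto e \cdot <n>$ is total computable and its value is (the code of) a finite sequence of naturals containing $f(n)$. Therefore $g(n) := \max\{ (e \cdot <n>)_i : i \leq |e \cdot <n>| \}$ is total computable with $f(n) \leq g(n)$ for all $n$, and an index for $g$ can be read off effectively from $e$.

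For the converse, suppose $f(n) \leq g(n)$ for all $n$ with $g$ total computable. The idea is that even though $f$ itself may be wildly non-computable, it can be tracked by over-generating: given a code $m$ of a sequence $(m_1, \ldots, m_k)$, let $e \cdot m$ be the code of $(0, 1, \ldots, N)$ where $N = \max\{ g(m_1), \ldots, g(m_k) \}$, and the empty sequence when $k = 0$. This is total computable in $m$ with values in $!{\cal N}$, so (i) holds; and since $f(m_i) \leq g(m_i) \leq N$, each $f(m_i)$ occurs in $(0, \ldots, N)$, so (ii) holds. An index for $e$ is obtained from one for $g$.

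Every computation involved is routine; the one idea worth isolating --- and the reason the lemma has this form --- is the move in the converse, which exploits the Herbrand structure of the natural numbers object (potential realizers are finite lists, actual realizers are those lists that merely \emph{contain} a witness, upward closed) to let a tracking hedge its bets by listing all candidates below a known bound rather than computing $f$ exactly. I therefore foresee no real obstacle, only the minor bookkeeping of checking that the $e$ exhibited above is total on $!{\cal N}$ and lands in $!{\cal N}$, which is immediate from its definition.
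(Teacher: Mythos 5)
Your proposal is correct and follows essentially the same route as the paper: extract a computable bound by taking the maximum of the sequence $e \cdot <n>$, and conversely track a bounded $f$ by over-generating the list $<0, \ldots, N>$ from the bound (your treatment of non-singleton input sequences is just a slightly more explicit version of the same idea).
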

\begin{proof}
Note that it is necessary and sufficient for a function $f: (\NN, {\cal N}, \nu) \to (\NN, {\cal N}, \nu)$ to be tracked that one can compute for every $n$ a sequence $r(n) = <n_1, \ldots, n_k>$ such that $f(n) = n_i$ for some $i \leq k$. Since one can compute maxima, this implies that $f$ is bounded by a computable function $g$. If, on the other hand, $g$ is a computable function bounding $f$, then $r(n) = <0, \ldots, g(n)>$ shows that $f$ is tracked.
\end{proof}

\begin{coro}{boundedCT}
In the Herbrand topos the following bounded form of Church's Thesis holds:
\[ (\forall x \in \NN) \, (\exists y \in \NN) \, \varphi(x, y) \to (\exists e \in \NN) \, (\forall x \in \NN) \, \big( e \cdot x \downarrow \land \, (\exists y \leq e \cdot x) \, \varphi(x, y) \big). \]
\end{coro}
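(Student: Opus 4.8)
The plan is to derive this from Lemma~\ref{herbrandcomputable} in essentially the same way one derives the usual Church's Thesis from a ``tracking equals recursive'' lemma, but keeping careful track of the fact that in the Herbrand topos a tracking of a function $\NN \to \NN$ only provides a finite list containing the value. First I would rewrite the antecedent: the formula $(\forall x \in \NN)\,(\exists y \in \NN)\,\varphi(x,y)$ being valid in the topos means, after applying the interpretation of the quantifiers given in the proof of Theorem~\ref{Herbrandtripos}, that there is a realizer which, given (a potential/actual realizer at) $x$, produces a finite list of candidates together with realizers of $\varphi(x,y)$ for the entries. Since $(\NN, {\cal N}, \nu)$ is the natural numbers object (Proposition~\ref{nnoinHerbrandtopos}) and is partitioned, the realizer at $x$ is just (a list containing) the Church numeral for $x$, so effectively we have a partial recursive procedure that on input $x$ outputs a code for a finite nonempty list $\langle y_1, \dots, y_k\rangle$ of natural numbers, at least one of which satisfies $\varphi(x, -)$ together with a witnessing realizer; and by the upward-closure built into $\Sigma$ we may always assume the list is total in $x$.

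Next I would observe that from such a procedure one gets, uniformly and computably, a total computable function $h$ with $h(x) = \max\{y_1, \dots, y_k\}$, so that $(\exists y \le h(x))\,\varphi(x,y)$ holds (in the topos) witnessed by the same data. By the $s$-$m$-$n$ theorem there is an index $e$ for $h$, i.e.\ $e \cdot x \downarrow$ and $e \cdot x = h(x)$ for all $x$; this is the $e$ required in the consequent. The remaining task is to check that the implication as a whole is \emph{realized} in the Herbrand topos, not merely true in the metatheory: one must produce, uniformly, a realizer for the consequent from a realizer for the antecedent. Here I would use exactly the shape of the constructions in Theorem~\ref{Herbrandtripos}: the existential quantifier $(\exists e \in \NN)$ is interpreted via a union over fibres with an upward-closed set of one-element lists of actual realizers, so it suffices to output the single Church numeral $e$ (wrapped as $\langle e\rangle$) together with a realizer of $(\forall x)(\dots)$; the latter, by the description of $\forall_f$, is a code for a function of $x$ that returns the list-of-candidates-and-witnesses obtained above. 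Since everything in Lemma~\ref{herbrandcomputable} was asserted to be effective (``from a tracking one can compute a bound and vice versa''), these transformations are all given by fixed elements of $K_1$, so the whole implication is realized.

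The step I expect to be the main obstacle is the bookkeeping in the last paragraph: matching the informal description ``a procedure outputting a finite list of candidates'' precisely against the formal clauses for $\exists_f$, $\forall_f$, $\to$, $\land$ from the proof of Theorem~\ref{Herbrandtripos}, and in particular verifying that one genuinely gets a \emph{uniform} realizer of the implication rather than just a metatheoretic entailment. The subtlety specific to the Herbrand setting is that $\varphi(x,y)$ need not be decidable, so we cannot search the list $\langle y_1,\dots,y_k\rangle$ for ``the'' correct $y$; but we do not need to---we keep the whole list and the bound $h(x) = \max_i y_i$, and the realizer of $(\exists y \le e\cdot x)\,\varphi(x,y)$ is allowed (by the very definition of disjunction and bounded existence in $\Sigma$) to be a Herbrand-style list that does not point at which $y$ works. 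This is precisely the phenomenon flagged in the introduction, and it is what makes the argument go through without any decidability or choice assumptions beyond the metatheoretic choice already used (implicitly, via the interpretation) to get the realizer of the antecedent in the first place. Once this is in place the corollary follows, so I would present it as: unwind the antecedent using Theorem~\ref{Herbrandtripos} and Proposition~\ref{nnoinHerbrandtopos}, apply Lemma~\ref{herbrandcomputable} to extract a computable bound $h$, take $e$ an index for $h$ via $s$-$m$-$n$, and reassemble a realizer of the consequent using the explicit $\exists_f$ and $\forall_f$ clauses.
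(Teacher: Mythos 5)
Your argument is correct, but it takes a genuinely different route from the paper's. The paper's proof is a one-liner: it combines \reflemm{herbrandcomputable} with \refcoro{nnoprojectiveinassemblies}, i.e.\ with the internal and external projectivity of the natural numbers object in the Herbrand assemblies. Projectivity packages exactly the step you flag as the main obstacle: it yields, internally, a choice function $f \colon \NN \to \NN$ with $\varphi(x, f(x))$, and then \reflemm{herbrandcomputable} (used effectively, as its statement promises) converts a tracking of $f$ into a computable bound whose index is the required $e$; the uniformity needed for the whole implication to be realized is absorbed into ``internally projective''. You instead bypass \refcoro{nnoprojectiveinassemblies} altogether and redo that uniformity by hand: unwinding the $\forall$/$\exists$ clauses of the tripos, extracting the finite list of candidate witnesses, taking its maximum to get a computable bound $h$, and reassembling a realizer of the consequent via the s-m-n theorem. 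That is a sound, more self-contained realizability computation; its only slightly off-label step is the appeal to \reflemm{herbrandcomputable} itself, which is stated for trackings of single-valued maps $\NN \to \NN$, whereas the antecedent hands you a multi-valued candidate assignment --- but the max-of-the-list trick you use is precisely the content of that lemma's proof (or one can reduce to the lemma by a classical choice, in the metatheory, of a witness function whose value lies in the candidate list), so nothing is lost. What the paper's route buys is brevity and freedom from tripos bookkeeping (at the implicit cost of observing that the relevant subobject of $\NN \times \NN$ is separated, hence an assembly, so that projectivity in $\HAsm[\pca{P}]$ applies); what your route buys is an explicit realizer and independence from the projectivity corollary.
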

\begin{proof}
Follows from the previous lemma and \refcoro{nnoprojectiveinassemblies}.
\end{proof}

\begin{theo}{someprinciples}
In the Herbrand topos, the weak law of excluded middle
\[ \lnot \varphi \lor \lnot \lnot \varphi \]
is valid. Hence the De Morgan laws hold, as does:
\begin{equation} \label{formofLEM} (\forall x \in \NN) \, ( \, P(x) \lor \lnot P(x) \,) \to (\forall x \in \NN) \, P(x) \lor \lnot(\forall x \in \NN) \, P(x). 
\end{equation}
In particular, the law of excluded middle is valid for $\Pi_1^0$-formulas. But Markov's principle fails and so does Church's thesis.
\end{theo}
\begin{proof}
It follows from the proof of \reflemm{doublenegation} that $<\lambda p. <>>$ is an actual realizer of $\lnot \varphi$ or of $\lnot \lnot \varphi$ (depending on whether $\varphi$ has an actual realizer or not). Hence ${\bf p}(<\lambda p. <>>,<\lambda p. <>>)$ is an actual realizer of $\lnot \varphi \lor \lnot \lnot \varphi$. The De Morgan laws (in particular, $\lnot (\varphi \land \psi) \to \lnot \varphi \lor \lnot \psi$), the principle in (\ref{formofLEM}) and the law of excluded middle for $\Pi^0_1$-formulas are immediate consequences of this.

Nevertheless, not all classical arithmetic is valid in the Herbrand topos: this follows from the previous corollary. Therefore Markov's Principle must fail (because Markov's Principle together with (\ref{formofLEM}) implies full classical arithmetic). Alternatively, one can argue directly for the failure of Markov's Principle by showing that a realizer for it would allow one to solve the halting problem (along the same lines as for modified realizability; see \cite[Proposition 5.9]{bergbriseidsafarik12}).

Church's Thesis in the form ${\rm CT}^{\lor}_0$ is incompatible with the law of excluded middle for $\Pi_1^0$-formulas (again, use two recursively inseparable r.e.~subsets of $\NN$, or see \cite[Section 4.3]{troelstravandalen88a}). Also ${\rm CT}$ fails: in fact, it follows from \reflemm{herbrandcomputable} that every bounded function $f: \NN \to \NN$ is tracked.
\end{proof}

\begin{coro}{continuity}
Continuity principles, like every function $f: \R \to \R$ is continuous, fail in the Herbrand topos.
\end{coro}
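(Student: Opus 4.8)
The plan is to work in the internal logic of $\HT[\pca{P}]$ and produce an explicit discontinuous function $\R \to \R$, thereby refuting the statement ``every $f : \R \to \R$ is continuous''. The only ingredient beyond the fact that $\HT[\pca{P}]$ is a topos with a real-number object is the weak law of excluded middle from the previous theorem: since the realizer ${\bf p}(<\lambda p.<>>, <\lambda p.<>>)$ is uniform in the formula, $\HT[\pca{P}]$ validates $\forall x \in \R\,(x \leq 0 \lor \lnot\lnot(x > 0))$ (this being an instance of $\lnot\varphi \lor \lnot\lnot\varphi$ with $\varphi := (x > 0)$, and $x \leq 0$ defined as $\lnot(x>0)$).

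First I would show that, internally, for every $x \in \R$ there is a \emph{unique} $y \in \{0,1\}$ satisfying $(x \leq 0 \to y = 0) \land (\lnot\lnot(x>0) \to y = 1)$. Both existence and uniqueness follow by $\lor$-elimination on the displayed instance of the weak law of excluded middle, using only that the two cases $x \leq 0$ and $\lnot\lnot(x > 0)$ are mutually contradictory (the former is literally $\lnot(x>0)$). Since every elementary topos satisfies unique choice, this produces a morphism $f : \R \to \R$, with image in $\{0,1\}$, such that $f(x) = 0$ whenever $x \leq 0$ and $f(x) = 1$ whenever $\lnot\lnot(x>0)$; by decidability of $\{0,1\}$ one moreover gets $f(y) = 0 \leftrightarrow y \leq 0$ for all $y$. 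It then remains to check that $f$ is not continuous at $0$: we have $f(0) = 0$, so if $f$ were continuous at $0$ then, taking $\varepsilon = 1/2$, there would be $\delta > 0$ with $f(y) = 0$, i.e.\ $y \leq 0$, for all $y$ with $|y| < \delta$; but $|\delta/2| < \delta$ while $\delta/2 > 0$ contradicts $\delta/2 \leq 0$. Hence $\HT[\pca{P}]$ validates ``$f$ is not continuous at $0$'', so ``every function $\R \to \R$ is continuous'' fails, and the remaining continuity principles (uniform continuity of all $f:\R\to\R$, Brouwer's continuity principle on Baire space, and so on) go down either by the usual reductions to this case or by an entirely parallel argument using the same instance of weak excluded middle.

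The one step requiring care is the passage from the weak law of excluded middle to an honest morphism $f$: a naive definition ``by cases'' on the disjunction $x \leq 0 \lor \lnot\lnot(x>0)$ would risk depending on which disjunct is used to split, which is exactly why I would phrase it through $\exists!$ and unique choice rather than case-definition. (It is also worth checking explicitly that the real-number object of $\HT[\pca{P}]$ behaves as expected — it is built from the genuine nno $(\NN,{\cal N},\nu)$, not from $\nabla\NN$ — but this causes no difficulty.) Apart from that, the argument is just the standard intuitionistic observation that even weak excluded middle is incompatible with the continuity of all real functions.
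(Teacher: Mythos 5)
Your argument is correct and is essentially the paper's: the paper disposes of the corollary by citing Troelstra--van Dalen (Proposition 4.6.4) for the incompatibility of continuity principles with the excluded-middle-type principles established in the preceding theorem, whereas you inline that textbook incompatibility, using the uniformly realized weak law of excluded middle (applied to $x>0$ with a real parameter) plus unique choice to build an explicit discontinuous $f:\R\to\R$. So the key input and the overall route coincide; you have merely spelled out the cited step.
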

\begin{proof}
Because these are incompatible with the law of excluded middle for $\Pi^0_1$-formulas (see, for example, \cite[Proposition 4.6.4]{troelstravandalen88a}).
\end{proof}

\begin{coro}{notordinaryrealizability}
The Herbrand topos is not equivalent to a realizability topos over an (order-)pca.
\end{coro}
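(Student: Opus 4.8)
The plan is to isolate a categorical property that is invariant under equivalence of categories, holds in every realizability topos over an order-pca, and fails in the Herbrand topos over $K_1$; the convenient choice is \emph{projectivity of the natural numbers object}.

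First I would recall the standard fact that in any realizability topos $\mathrm{RT}(A)$ over an order-pca $A$ the natural numbers object is (externally) projective. It is carried by the partitioned assembly in which the numeral $\bar n$ is the (essentially unique) realizer of $n$, and partitioned assemblies are projective in $\mathrm{RT}(A)$: given a cover realized by some $r \in A$, one applies $r$ to $\bar n$ to obtain a realizer of an element of the fibre over $n$ and, using countable choice in the metatheory, assembles these choices into a tracked section. Both ``natural numbers object'' and ``projective object'' are defined by universal properties in the ambient category, so projectivity of the natural numbers object transports along any equivalence of categories.

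Then I would invoke \refprop{nnonotprojectiveinHerbrandtopos}, which states that for $\pca{P} = K_1$ the natural numbers object of $\HT[K_1]$ is not externally projective. Combining the two observations, $\HT[K_1]$ cannot be equivalent to $\mathrm{RT}(A)$ for any order-pca $A$, which is exactly the assertion of the corollary.

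The one point requiring care --- and the main obstacle, such as it is --- is checking that the natural numbers object really is projective in $\mathrm{RT}(A)$ when $A$ is merely an order-pca rather than an ordinary pca, since the argument is usually phrased for the ordinary case; but the combinators needed to ``read off'' the value $n$ from the realizer $\bar n$ are available in any order-pca, so the proof goes through unchanged and I expect no genuine difficulty here. As additional motivation one can also note the purely logical contrast: by \reftheo{someprinciples} the Herbrand topos validates excluded middle for $\Pi^0_1$-formulas while refuting Church's thesis, and by \refcoro{continuity} it also refutes continuity --- a combination excluded in the two most familiar realizability toposes --- although turning this into a proof would require a classification of realizability toposes that is not available to us, whereas the projectivity argument is entirely self-contained.
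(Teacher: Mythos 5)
Your argument is correct, but it takes a genuinely different route from the paper. The paper's own proof is a one-liner via a logical invariant: Markov's Principle holds in every realizability topos over an (order-)pca, while it fails in $\HT[K_1]$ by \reftheo{someprinciples}. You instead use a categorical invariant: external projectivity of the natural numbers object, which fails in $\HT[K_1]$ by \refprop{nnonotprojectiveinHerbrandtopos}, and which holds in any realizability topos over an order-pca. The point you rightly flag as needing care is the only real work in your version: one must check that the usual projectivity argument survives the passage from pcas to order-pcas. It does, but the reason is not so much the availability of combinators ``reading off'' $n$ from $\bar n$ as the shape of the downset tripos: the existential quantifier there is a union of downsets, so a realizer of $\forall n\,\exists y\,(e(y)=n)$ applied to the numeral $\bar n$ (or to anything below it, using monotonicity of application) still lands in the realizer set of one \emph{specific} witness, and choice in the metatheory then assembles a tracked section as a functional relation against an arbitrary epi $Y \to N$ in the topos, not just one between assemblies. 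Comparing the two: the paper's route needs no choice and rests on the most commonly cited property of (order-)pca realizability toposes; yours is self-contained relative to the paper (it reuses \refprop{nnonotprojectiveinHerbrandtopos}, which the paper proves but does not otherwise exploit for this corollary) and highlights a structural rather than logical obstruction. Both only establish the statement for $\pca{P}=K_1$, which is the standing assumption of this section. Your closing remark about the combination of $\Pi^0_1$-excluded middle with the failure of Church's thesis and continuity is, as you say, motivation rather than proof, and can be dropped.
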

\begin{proof}
Because in such toposes Markov's Principle holds.
\end{proof}

\begin{coro}{twovaluednotboolean}
The Herbrand topos is two-valued, but not boolean.
\end{coro}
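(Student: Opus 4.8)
The plan is to split the two claims. For two-valuedness, recall that the subobject classifier of the Herbrand topos is built from $\Sigma$, so a global element of $\Omega$ — equivalently, a subobject of the terminal object $1$ — is given by a single element $(A_0, A_1) \in \Sigma$ up to the equivalence generated by $\leq$ in both directions. I would show that any such $(A_0, A_1)$ is equivalent either to the top element $(!\pca{P}, \pca{P})$ or to the bottom element $(\emptyset, \emptyset)$. By \reflemm{doublenegation}, the only thing that matters for the purposes of $\neg\neg$ is whether $A_0$ is inhabited, but here I need something sharper: I claim that if $A_0$ is inhabited then $(A_0, A_1)$ is already isomorphic to $\top$ in $\Sigma$. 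Indeed, if $a \in A_0$, then since $A_0$ is upwards closed in $!A_1$ and $a$ codes a finite sequence, the constant map $n \mapsto a * n$ (which is representable in the pca and lands in $!A_1$ since the components of $a*n$ are among those of $a$ and $n$, and all of $a$'s components witness membership... actually one must be careful here) exhibits $(!\pca{P}, \pca{P}) \leq (A_0, A_1)$; conversely $(A_0, A_1) \leq (!\pca{P}, \pca{P})$ trivially since $(!\pca{P}, \pca{P})$ is the top. And if $A_0 = \emptyset$ then $(A_0, A_1) \leq (\emptyset, \emptyset)$ via the map $n \mapsto <>$ when... — here one exploits that $A_0$ empty forces the actual-realizer condition to be vacuous while the potential realizers can be sent to $<>$, noting $<> \in !\emptyset$. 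Wait: $<> \in !\emptyset$ is true since the empty sequence has no components. So the map $\lambda n.<>$ works, giving $(A_0,A_1) \le (\emptyset,\emptyset)$, and the reverse inequality is automatic. Hence $\Omega$ has exactly two global points and the topos is two-valued.

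For non-booleanness, I would argue that if the Herbrand topos were boolean then every subobject would be complemented, so in particular for every $(A_0, A_1) \in \Sigma$ we would have $(A_0, A_1) \lor \lnot(A_0, A_1) \geq \top$. The cleanest route is to combine two facts already in the excerpt: by \refthm{someprinciples}, Markov's principle \emph{fails} in the Herbrand topos, whereas in any boolean topos Markov's principle holds trivially (it is a classical tautology, being of the form $\lnot\lnot\exists\,\varphi \to \exists\,\varphi$ with $\varphi$ decidable — or even without decidability, $\lnot\lnot\psi\to\psi$ holds). Alternatively, and perhaps more directly, \refthm{someprinciples} shows that arithmetic in the Herbrand topos is not classical (the natural numbers object does not satisfy full classical logic — this follows from \refcoro{boundedCT}), and a boolean topos validates classical logic internally, in particular classical arithmetic; contradiction.

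I expect the main obstacle to be the first part: verifying carefully that an inhabited $(A_0,A_1)$ is genuinely \emph{isomorphic} (not merely $\neg\neg$-equivalent) to the top element in $\Sigma$, which requires producing pca elements tracking the comparison $\top \le (A_0,A_1)$ uniformly. The subtlety is that given $a \in A_0$ one needs $a * n \in !A_1$ for all $n \in !\pca{P} = \pca{P}$; but $n$ ranges over \emph{all} codes, and $a * n$ will have components from $n$ which need not lie in $A_1$. So the naive map does not land in $!A_1$. The fix is to instead send $n \mapsto a$ (the constant map at the fixed actual realizer $a$): this is representable, lands in $!A_1$ since $a \in A_0 \subseteq !A_1$, and sends any $n \in \pca{P}$ to $a \in A_0$, which is exactly what is required for $\top \le (A_0, A_1)$. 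With that correction the argument goes through cleanly, and two-valuedness follows; non-booleanness is then essentially a citation of the failure of Markov's principle established earlier.
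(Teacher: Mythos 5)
Your proof is correct, and the non-boolean half is exactly the paper's argument: booleanness would make Markov's principle an internal tautology, contradicting its failure established earlier. For two-valuedness, however, you take a genuinely different (and slightly more hands-on) route: you compute the subobjects of $1$ directly at the tripos level, showing that every $(A_0,A_1) \in \Sigma$ is isomorphic in the preorder either to $(!\mathcal{P},\mathcal{P})$ (when $A_0$ is inhabited, via the constant map at a fixed $a \in A_0$ -- your correction of the naive $n \mapsto a*n$ map is exactly right, and needed) or to $(\emptyset,\emptyset)$ (when $A_0$ is empty, via $\lambda n.\langle\rangle$, using that the empty sequence lies in $!A_1$ for any $A_1$). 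The paper instead observes that the terminal object has only two subobjects in the category of Herbrand assemblies and that, since $1$ is $\lnot\lnot$-separated and separated objects are closed under subobjects, its subobject lattice in the topos coincides with that in the assemblies; this buys brevity by leaning on the structural results of Section 5, whereas your computation is self-contained and makes the realizers explicit (the underlying verification is essentially the same constant-map trick in both cases). Two small points to tidy: you should note explicitly that top and bottom are not isomorphic in $\Sigma$ (immediate, since $!\mathcal{P}$ is inhabited and cannot be mapped into $\emptyset$), so that there are exactly two subobjects rather than at most two; and the identification of subobjects of $1$ with the poset reflection of the fibre $\Sigma^1 = \Sigma$ deserves a one-line citation to the standard tripos-to-topos facts rather than being taken as read.
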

\begin{proof}
The terminal object has only two subobjects in the category of Herbrand assemblies; since the terminal object is separated and separated objects are closed under subobjects, the same applies to the Herbrand topos. Hence the Herbrand topos is two-valued. Nevertheless, it is not boolean, since Markov's Principle fails.
\end{proof}

\begin{defi}{tree} (See \cite[page 186]{troelstravandalen88a}.) In the following we will mean by a \emph{tree} an inhabited and decidable set of finite sequences of natural numbers, closed under predecessors. A tree will be called \emph{finitely branching} if
\[ (\forall n \in T) \, (\exists z \in \NN) \, (\forall x\in \NN) \, ( \, n * <x> \in T \to x \leq z). \]
Finally, by an \emph{infinite path} in a tree $T$ we will mean a function $\alpha: \NN \to \NN$ such that $\overline{\alpha}n = < \alpha0, \ldots, \alpha(n-1)> \in T$ for all $n \in \NN$.
\end{defi}

\begin{lemm}{fansuniform}
The collection of infinite paths in a finitely branching tree is a uniform object. More precisely, if $T$ is a finitely branching tree, then one can compute from a realizer for the statement that $T$ is a finitely branching tree an element $n \in \pca{P}$ such that $<n>$ is a common realizer for all infinite paths in the tree.
\end{lemm}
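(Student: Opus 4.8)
The plan is to extract from a realizer of ``$T$ is a finitely branching tree'' a single \emph{computable} function bounding, level by level, every infinite path of $T$, and then to feed this function into the second half of \reflemm{herbrandcomputable} so as to obtain one code that tracks all paths at once.

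First I would pin down what a realizer of a path is. The infinite paths of $T$ form the evident subobject of the exponential $(\NN,\mathcal N,\nu)^{(\NN,\mathcal N,\nu)}$, which is computed as in \reflemm{HAsmlccc}; unwinding that construction together with the exponential isomorphism of Section~2, a realizer of a path $\alpha$ comes down to a code $N$ tracking $\alpha$ as a morphism $(\NN,\mathcal N,\nu)\to(\NN,\mathcal N,\nu)$ --- that is, by \reflemm{herbrandcomputable}, a code which on input $m$ returns a finite list of naturals containing $\alpha(m)$. So a \emph{common} realizer for all paths is precisely a single such $N$ working for every path $\alpha$ at once, and by the proof of \reflemm{herbrandcomputable} it suffices to produce a computable $D\colon\NN\to\NN$ with $\alpha(m)\le D(m)$ for every infinite path $\alpha$ and every $m$; one then takes $N$ to compute $m\mapsto <0,1,\ldots,D(m)>$. (Note that every infinite path of a finitely branching tree is then automatically a morphism $(\NN,\mathcal N,\nu)\to(\NN,\mathcal N,\nu)$, being bounded by the computable $D$, so the paths really do live in this exponential.)

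To build $D$, observe that a realizer of $(\forall n\in T)(\exists z\in\NN)(\forall x\in\NN)(n*<x>\in T\to x\le z)$ yields, effectively and uniformly in $n\in T$, a finite list of naturals one of whose entries correctly bounds all $T$-children of $n$; since we need not know \emph{which} entry works, we set $Z(n):=$ the maximum of that list, which is then a genuine computable function with the property that every $T$-child of $n$ is $\le Z(n)$. Now define $D$ by recursion: $D(0):=Z(<>)$, and $D(k):=$ the maximum of $D(k-1)$ together with all values $Z(n)$ for $n\in T$ of length $k$ whose entries are all $\le D(k-1)$ --- a finite decidable search, since $T$ is decidable. A straightforward induction on $k$ then shows that every $T$-node of length $k$ has all its entries $\le D(k-1)$; applied to the initial segments $\overline{\alpha}(m+1)\in T$ of an infinite path, this gives $\alpha(m)\le D(m)$, uniformly in $\alpha$. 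Every step here is effective in the original realizer, so the tracking $N$, and hence the common realizer $<N>$ for all paths, is computed from it; this also exhibits the object of infinite paths as uniform.

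The main obstacle is conceptual rather than computational: ``finitely branching'' only supplies, in Herbrand fashion, a finite \emph{list} of candidate bounds at each node with no indication which one is correct, and yet from it we must manufacture a single honest bounding function. Taking the maximum of the candidate list and then propagating it level by level through the (decidable) tree is what resolves this; it is precisely the bounded-modified-realizability manoeuvre that underlies the validity of the Fan Theorem in this setting, and it uses no K\"onig's lemma or bar induction in the metatheory.
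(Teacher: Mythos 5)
Your proposal is correct and follows the same route as the paper: extract a single computable bound on $\alpha(m)$ valid for every infinite path $\alpha$ (by taking maxima over the Herbrand lists of candidate bounds and propagating them through the decidable tree) and then invoke \reflemm{herbrandcomputable} to turn that bound into one tracking, hence a common realizer $<N>$. The paper's proof is exactly this argument in sketch form; you have merely filled in the effective construction of the bounding function that it leaves implicit.
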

\begin{proof}
From a realizer for the statement that $T$ is finitely branching tree one can compute a bound $f(n)$ on the value of $\alpha(n)$ for every infinite path $\alpha$. This is sufficient in view of  \reflemm{herbrandcomputable}.
\end{proof}

\begin{prop}{koenig}
In the Herbrand topos K\"onig's Lemma holds.
\end{prop}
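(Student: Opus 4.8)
The plan is \emph{not} to carry out, inside the internal logic, the usual classical construction of a path (repeatedly choosing a child with infinitely many descendants): the step ``a node with infinitely many descendants has such a child'' amounts, internally, to an instance of Markov's principle, which fails in the Herbrand topos. Instead I would use that the infinite paths through a finitely branching tree form a \emph{uniform} object, as recorded in \reflemm{fansuniform}. The point of uniformity is that a path then only has to exist in the metatheory --- where classical K\"onig's Lemma is at our disposal --- while the Herbrand existential quantifier will not require a realizer to point it out.

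Concretely, let $T$ be a finitely branching tree which is infinite, say in the sense that $(\forall k \in \NN)(\exists u \in T)\, |u| = k$, and let $P$ be the subobject of the exponential $\NN^\NN$ consisting of the $\alpha$ with $(\forall k)\, \overline{\alpha}k \in T$, i.e.\ the infinite paths of $T$. By \reflemm{fansuniform} one computes from the realizers that make $T$ a finitely branching tree an element $n \in \pca{P}$ for which $<n>$ is a common realizer of every infinite path; so $P$ is uniform and, being a subobject of the Herbrand assembly $\NN^\NN$, is itself a Herbrand assembly, whence \reflemm{useofzero}, applied with this $n$, yields $P \cong \nabla |P|$, where $|P|$ is the underlying set of $P$. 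I would then check that $|P|$ is exactly the set of infinite paths of $T$ in the ordinary external sense: an element of $P$ is a global section of $\NN^\NN$, hence by \reflemm{herbrandcomputable} a function dominated by a computable one satisfying $\overline{\alpha}k \in T$ for all $k$; conversely an external infinite path satisfies $\alpha(k) \le b(k)$ with $b(k) = \max\{\, z(u) : u \in T,\ |u| = k \,\}$, where $z$ is the computable branching bound extracted from the realizer that $T$ is finitely branching and $b$ is computable because, $T$ being decidable, the set of nodes of $T$ of length $k$ is finite and computable by recursion on $k$. Since $T$ is externally an infinite finitely branching tree (the hypothesis realizers witness decidability, finite branching and infinitude externally), classical K\"onig's Lemma in the metatheory makes $|P|$ inhabited; and as $\nabla$ preserves the validity of first-order formulas (shown above) and $P \cong \nabla|P|$, the object $P$ is inhabited in the Herbrand topos. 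That is exactly the assertion that $T$ has an infinite path.

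The delicate point --- and the step I expect to be the real obstacle --- is to see that the whole argument is uniform in $T$, so that it genuinely produces one realizer of the implication $(T \mbox{ an infinite finitely branching tree}) \to (\exists\alpha)(\forall k)\, \overline{\alpha}k \in T$, for $T$ ranging over arbitrary (generalized) elements. This succeeds precisely because of the two features stressed above: \reflemm{fansuniform} manufactures the common path-realizer $n$ uniformly from part of the antecedent's realizer, and the Herbrand existential is the upward closure of $\{\, <m> : (\exists x \in f^{-1}(y))\, m \in \phi(x)_0 \,\}$ --- a list of merely \emph{potential} witnesses that records no actual one. Hence the realizer of the consequent may be taken to be the singleton list built from $n$, paired with a fixed realizer (read off from the decidability data for $T$) of the plainly true statement $\overline{\alpha}k \in T$; the path $\alpha$ itself is never computed, its existence being supplied once and for all by the classical metatheory.
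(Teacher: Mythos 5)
Your proposal is correct and takes essentially the same route as the paper: its three-sentence proof likewise rests on \reflemm{fansuniform} (a single common realizer for all infinite paths, computed uniformly from the hypothesis realizers) together with the classical truth of K\"onig's Lemma in the metatheory, the point being that the weak Herbrand existential never has to name the path. Your extra packaging via \reflemm{useofzero}, $P \cong \nabla |P|$ and the fact that $\nabla$ preserves first-order validity merely makes explicit what the paper leaves implicit.
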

\begin{proof} Recall that K\"onig's Lemma says that every finitely branching tree which is infinite contains an infinite path. Of course, K\"onig's Lemma is true, but the question is whether we can compute a realizer for an infinite path. The previous lemma, however, guarantees that we can.
\end{proof}

\begin{prop}{fan}
In the Herbrand topos the Fan Theorem holds.
\end{prop}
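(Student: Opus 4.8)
The plan is to follow the pattern familiar from the bounded modified realizability interpretation (see \cite{ferreiranunes06}): the whole point is that, by \reflemm{fansuniform}, there is a single realizer that witnesses ``$\alpha$ is an infinite path in $T$'' for \emph{every} infinite path $\alpha$ simultaneously, so that feeding the hypothesis this realizer produces a \emph{finite} list of candidate witnesses that serves all paths at once; the required uniform bound is then just the maximum of this finite list.

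In more detail, recall that the Fan Theorem asserts, for a finitely branching tree $T$ and a property $P$ of finite sequences, the implication
\[ \big( \forall \alpha\, (\, \alpha \in T \to \exists n\, P(\overline{\alpha}n) \,) \big) \ \longrightarrow \ \exists N\, \forall \alpha\, (\, \alpha \in T \to \exists n \leq N\, P(\overline{\alpha}n) \,), \]
where ``$\alpha \in T$'' abbreviates that $\alpha$ is an infinite path in $T$. To realize it, we may assume we are handed a realizer $r$ of the antecedent and, since $T$ is given as a finitely branching tree, a realizer of that fact as well. First I would apply \reflemm{fansuniform} to the latter realizer to compute an $n^{*} \in \pca{P}$ such that $<n^{*}>$ realizes ``$\alpha \in T$'' uniformly in $\alpha$. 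Instantiating $r$ at an arbitrary path and applying the resulting realizer of $\alpha \in T \to \exists n\, P(\overline{\alpha}n)$ to $<n^{*}>$ then produces an element $L \in \pca{P}$; since neither $r$ nor $n^{*}$ depends on $\alpha$, one and the same $L$ realizes $\exists n\, P(\overline{\alpha}n)$ for every infinite path $\alpha$.

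Now comes the crucial step. Unwinding the interpretation of $\exists n\, (-)$ over the natural numbers object $(\NN,{\cal N},\nu)$, an actual realizer of $\exists n\, P(\overline{\alpha}n)$ is (the upward closure of) a finite list of candidate witnesses $n_1,\ldots,n_k$ together with realizing data, at least one of which is correct, i.e., for some $i \leq k$ the attached data genuinely realizes $P(\overline{\alpha}n_i)$. Since $L$ is one fixed element of $\pca{P}$, the finite set of candidate witnesses it encodes is fixed; I would take $N$ to be its maximum, a number computable from $L$. Then for each infinite path $\alpha$ the witness $L$ supplies is some $n \leq N$ with $P(\overline{\alpha}n)$ realized, so, after attaching the trivial realizing data for the true inequalities $n \leq N$, $L$ becomes a realizer of $\exists n \leq N\, P(\overline{\alpha}n)$, still uniformly in $\alpha$. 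Packaging $N$ together with the constant function returning this realizer yields the desired realizer of the consequent.

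I expect the only real work to be bookkeeping: spelling out precisely how a realizer of $\exists n\, (-)$ over the natural numbers object encodes (a bound on) its witness, so that the passage ``read off $N$ from $L$'' is legitimate and uniform, and checking that instantiating the $\forall\alpha$-realizer $r$ at the uniform realizer $<n^{*}>$ really does yield something independent of $\alpha$. Both are routine; the substantive content --- the uniformity of the object of infinite paths --- has already been isolated in \reflemm{fansuniform} (which in turn rests on \reflemm{herbrandcomputable}). In particular, no instance of the Fan Theorem is needed in the metatheory, and the argument does not even require $P$ to be decidable.
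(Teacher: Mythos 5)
Your proof is correct and follows essentially the same route as the paper's: Lemma \reflemm{fansuniform} supplies one realizer serving all infinite paths, feeding it to the realizer of the antecedent yields a single actual realizer whose Herbrand-style finite list of candidate witnesses works for every path, and $N$ is taken to be its maximum. The only divergence is in formulation: the paper assumes the predicate is inherited by successors and uses that monotonicity to conclude $A(\overline{\alpha}N)$ outright, while you state the Fan Theorem with a bounded existential $\exists n \leq N$ and absorb the inequalities into the realizer --- intuitionistically equivalent versions, so nothing substantive is changed.
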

\begin{proof}
Suppose we have realizers for the statements that $T$ is a finitely branching tree, that $A(x)$ is a property of its nodes, inherited by successors, and that for every infinite path $\alpha$ there is a natural number $n \in \NN$ such that $A(\overline{\alpha}n)$ holds. From a realizer for the first statement we compute a common realizer for all infinite paths. Then from this and a realizer for the third statement we compute a finite sequence $<n_1, \ldots, n_k>$ such that for each infinite path there is an $i$ such that $A(\overline{\alpha}n_i)$ holds. But as $A$ is inherited by successors, we must have $A(\overline{\alpha}n)$ for all infinite paths $\alpha$, if $n = {\rm max}(n_1, \ldots, n_k)$.
\end{proof}

\section{Conclusion}

We have introduced a new topos and established some of its basic properties. Since this paper was written, a number of connections to other toposes have been uncovered: Jaap van Oosten has shown that the Herbrand topos is a  subtopos of the corresponding modified realizability topos, while Peter Johnstone has shown that it is the Gleason cover of the corresponding realizability topos \cite{johnstone13}. In addition, we have shown \cite{berg13} that there is a topos for the nonstandard functional interpretation developed in \cite{bergbriseidsafarik12} which is related to the modified Diller-Nahm topos (see \cite{streicher06} and \cite{biering08}) in the same way as the Herbrand topos is related to the modified realizability topos. But we expect that much more can be said.

\bibliographystyle{plain} \bibliography{ast}

\end{document}